%v01 Variables in x, Y etc
%v02 Variables in x1, x19 as in original (PNAS draft) file
%v03 Renamed Variables

% \errorcontextlines 10000

\documentclass[article, 12pt]{amsart}

\usepackage{amssymb,latexsym,amsfonts, amsthm, amsmath, amsrefs}%, showkeys}
\usepackage{enumerate}
\usepackage[shortlabels]{enumitem}
\usepackage{array}
\usepackage{longtable}
\usepackage{multicol}
\usepackage{multirow}
\newcolumntype{$}{>{\global\let\currentrowstyle\relax}}
\newcolumntype{^}{>{\currentrowstyle}}
%$
\usepackage{booktabs}

\usepackage{fancyhdr}
\usepackage{verbatim}
\usepackage{geometry}
\usepackage{fullpage}
 \usepackage{tikz}
\usepackage[all,cmtip]{xy}
\usetikzlibrary{shapes,arrows,chains,matrix,positioning,scopes}
\makeatletter
\tikzset{join/.code=\tikzset{after node path={%
\ifx\tikzchainprevious\pgfutil@empty\else(\tikzchainprevious)%
edge[every join]#1(\tikzchaincurrent)\fi}}}
\tikzset{>=stealth',every on chain/.append style={join},
         every join/.style={->}}

\input xy
\xyoption{all}

   % Numbered separately, as A, B, etc.
\newtheorem{thm}{Theorem}[section]   % Numbered within each section
   % Numbered within each section

\newtheorem{cor}[thm]{Corollary}     % Numbered along with thm
\newtheorem{lem}[thm]{Lemma}         % Numbered along with them
\newtheorem{prop}[thm]{Proposition}  % Numbered along with them

\theoremstyle{definition} 
\newtheorem{defn}[thm]{Definition}   % Numbered along with thm
        % Numbered along with thm
\newtheorem{remark}{Remark}   
\newtheorem{ex}[thm]{Example}        % Numbered along with thm
        % Numbered along with thm

\newtheorem{prob}[thm]{Problem}

  % to make the notation
                               % environment unnumbered

  % to make the terminology

\newcommand{\mc}{\ensuremath{\mathcal}}

\newcommand{\ZZ}{\ensuremath{\mathbb{Z}}}
\newcommand{\RR}{\ensuremath{\mathbb{R}}}

\newcommand{\FF}{\ensuremath{\mathbb{F}}}

\usepackage{parskip}
\usepackage{color}
\usepackage{soul}

\title{Convex Neural Codes in Dimension 1}

\author{Zvi Rosen, University of Pennsylvania, PA \\
Yan X Zhang, San Jose State University, CA}

\begin{document}
\setlength{\parskip}{5pt}

\begin{abstract}
 {\em Neural codes} are collections of binary strings motivated by 
patterns of neural activity.
In this paper, we study algorithmic and enumerative aspects of convex
neural codes in dimension $1$ (i.e. on a line or a circle). 
We use the theory of {\em consecutive-ones matrices} to obtain 
some structural and algorithmic results; we use generating 
functions to obtain enumerative results.

\noindent {\bf Key words}: Neural coding, interval arrangements,
PC- \& PQ-tree algorithm, convexity, 
consecutive-ones matrices, generating functions.

\end{abstract}

\maketitle

\section{Introduction}
\label{sec:introduction}
\subsection{Motivation}

The 2014 Nobel Prize in Medicine was awarded in part
for the discovery of neurons called  \emph{place cells} \cites{rat1,rat2,moser}.
In experiments, each place cell fires when a rat occupies
a specific region in its physical environment; 
the regions that trigger this response are
called \emph{place fields}. Experimental data indicates that place fields
are roughly convex. In practice, place fields are not strictly convex; 
still, convexity is useful as an abstraction 
to describe the essence of the empirical phenomena. 
Binary strings called \emph{codewords} capture the 
intersection patterns of convex sets in $\RR^d$ 
detected by ``sensors'' in space; we
call the set of all detected codewords 
the \emph{(convex neural) code}.  Seen in this light,
neural activity patterns are translated into algebraic-combinatorial
objects, and a whole new toolbox becomes relevant to their analysis.
 This research program was initiated quite recently 
\cite{curto1,curtoCoding}, and has already inspired a large body of
research, e.g. \cites{Curtoetal,itskov,shiu,itskov2016,gky2016, groebner,omar}. Our manuscript aims to supplement this 
literature with the 1-d case, using tools from the study of consecutive-ones matrices and algebraic combinatorics.

\begin{ex} The top image in Figure \ref{fig:pfex} describes the
arrangement of place fields in the rat's environment. The lower image 
is the corresponding interval arrangement in $\RR^1$. The binary
strings below indicate the corresponding codewords.
\end{ex}

\begin{figure}%[h!]
\includegraphics[height = 2.7in]{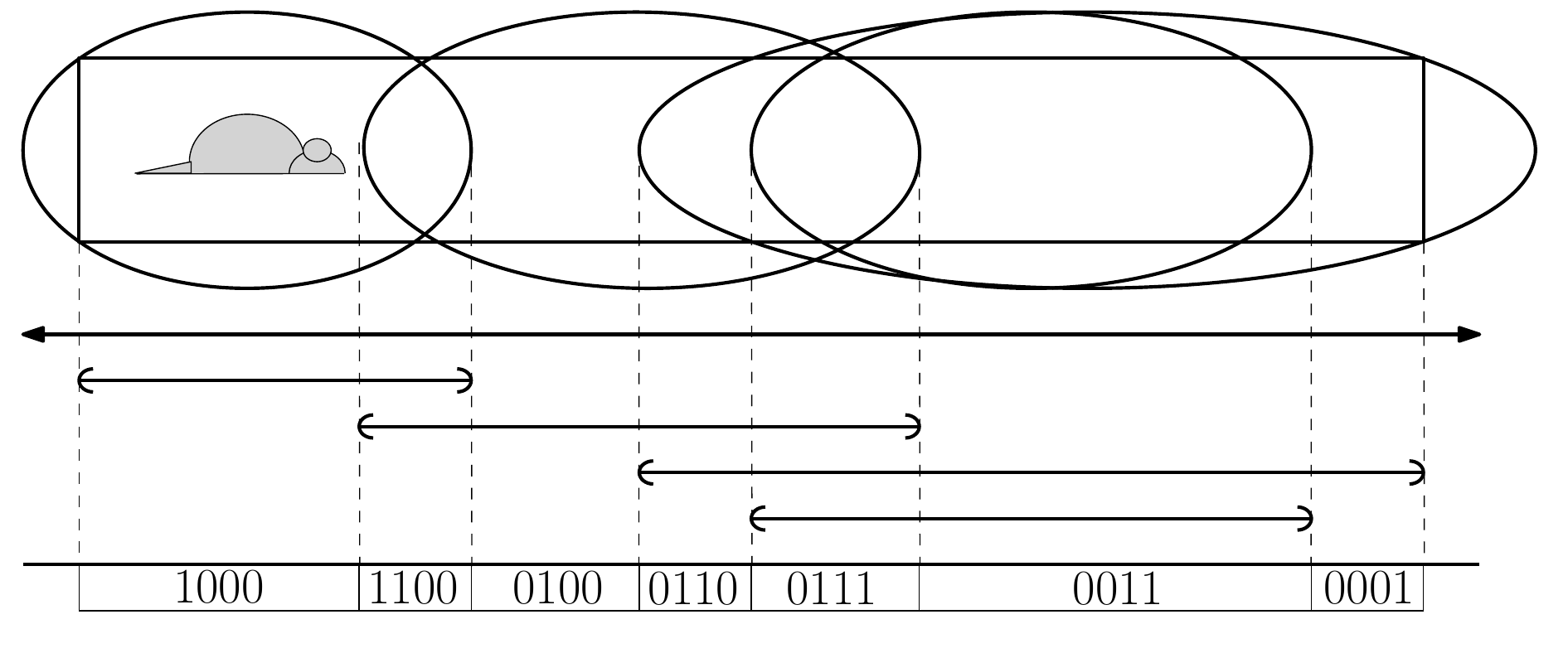}
 \caption{A rat on a linear track with $4$ place fields (convex regions) detected by neurons. For each area (space between dotted lines), a sensor in the area would give the corresponding codeword on the bottom when detecting which regions cover it. \label{fig:pfex}}
\end{figure}

The $1$-dimensional case ($1$-d) is a natural stepping stone to higher dimensions. In this work, we explore the combinatorial properties of neural codes in $1$ dimension from various angles such as consecutive-ones matrices and generating functions. 

\subsection{Mathematical Setup}

Consider some ambient space $X$ (usually $X = \RR^d$, but we are also interested in $S^1$ for this paper). Let $\mc{U} = \{U_1,\ldots,U_n\}$ be a set of open convex subsets of $X$. For a point $p \in X$, let the corresponding \emph{codeword} be the image of $p$ under the map $c: X \to \mathbb{F}_2^n$, where:
\[ c(p) = (c_1(p),\ldots, c_n(p)), \hspace{1cm} \text{ where    }
c_i(p) = \begin{cases} 1 & p \in U_i. \\ 0 & p \notin U_i. \end{cases} \] 
For a set of \emph{sensors} $S \subseteq X$ (intuitively, one can imagine a sensor as a device placed at a point which can detect if each place field covers it), we call the image of $S$ under $c()$ the \emph{convex neural code} $\mc{C}(\mc{U}, S)$. We call convex neural codes simply \emph{codes} for the remainder of this paper.

We say that a code $\mathcal{C}$ is \emph{realizable in dimension $d$} if $X$ is locally $d$-dimensional and there exists $\mc{U}$ with $U_i \subseteq X$ such that $\mc{C} =
\mc{C}(\mc{U}, X)$ (equivalently, all possible codewords are detected) in which case we say $\mc{U}$ \emph{realizes} $\mc{C}$. There are various partial results for the dimension
of realizability of codes; one of the authors' joint work \cite{Curtoetal} shows that if a code contains the all-$1$'s codeword, it is realizable in $2$-d.

In this work, we tackle the following questions:

\begin{prob}[Reconstruction] 
Given a code $\mc{C}$, detect if there exists a set of convex regions realizing the code, and try to construct one if there is.
\label{prob:recon}
\end{prob}

\begin{prob}[Algebraic Signatures] 
Find an algebraic-combinatorial signature of the code $\mc{C}$ that
indicates whether or not it is realizable under specific parameters.
\label{prob:sig}
\end{prob}

\begin{prob}[Enumeration]
With $n$ sensors, classify/enumerate the possible codes, multiset of codes, etc.
\label{prob:enum}
\end{prob}

We already see nontrivial mathematics in $1$-d, even though convex regions are simply intervals along a line. We call a set of intervals in $\RR$ (or $S^1$) an \emph{interval arrangement}. We now discuss some nuances in our setup. 

\begin{enumerate}
\item First, we vary the \textbf{topological constraints}. 
It is easy to get carried away by minutiae, but different 
real-world scenarios may warrant different constraints. 
For example, we may have so many sensors in $S$ as to guarantee that 
we would pick up all possible codewords for an arrangement. On the other end, we may only have a few sensors in our sensor set $S$ and miss potential codewords. We refer to these 
regimes as being {\em sensor-dense} (i.e. $\mc{C}(\mc{U},S) 
= \mc{C}(\mc{U},\RR)$) and {\em sensor-sparse} (i.e. $S$ is any finite 
set of points) respectively.  

A different line of inquiry about $1$-d topology 
is the following: suppose we pick up all possible codewords 
in an arrangement. What would happen if place fields must be
open convex intervals? What would happen 
if place fields can be convex intervals with any boundary type? 
The first directly corresponds to our sensor-dense regime. 
Interestingly, the second is equivalent to the sensor-sparse 
regime for $1$-d, despite the question being differently motivated. 
We explore the precise nature of the equivalence in Appendix~\ref{sec:topology}.

%This can be seen to be equivalent (at least for $1$-d)
%as placing openness constraints on intervals. The sensor-sparse 
%case could be understood as allowing intervals with any 
%boundary type; therefore, two adjacent intervals can have empty
%intersection. Meanwhile, the sensor-dense case would correspond
%to strict openness conditions on the intervals, thus necessitating
%``harmonious'' orderings.

\item Second, we can vary the \textbf{geometry} of our ambient set. The image in Figure \ref{fig:pfex} depicts a linear track. However, one could translate the setup to a circular
track, where each $U_i$ is a segment of $S^1$ homeomorphic to an interval. Strictly speaking, most subsets of $S^1$ (including $S^1$ itself!) are not convex as subsets of Euclidean space. However, arcs on $S^1$ are a reasonable analogue of intervals in $\RR^1$ as ``convex'' sets. One can see elsewhere, e.g. \cite{horn1949}, for similar treatments. From an applied perspective, we were inspired to include the circular case by \cite{hollup2001accumulation}, where rats running around a circular track have place fields corresponding to arcs on the track. We say we are in the \emph{linear} case if we have $k$ intervals and $n$ sensors on a line and in the \emph{circular} case if we have $k$ arcs and $n$ sensors on a circle.
\item Finally, the reconstruction problem can be also stated for \textbf{multisets}. In the definition of the code, we considered the set of codewords without multiplicity. When we take a finite number of sensors (points at which the codeword function is evaluated), the result is a multiset of codewords, which contains additional information for consideration.
\end{enumerate}

Problem~\ref{prob:recon} may be of interest to experimentalists doing a posteriori analysis using the code set as an invariant; we discuss reconstruction in Sections~\ref{sec:sparse-reconstruction}, \ref{sec:dense-reconstruction}, and \ref{sec:multiset-reconstruction}. Problem~\ref{prob:sig} may help resesarchers looking to reject faulty codes; we give a result for the sensor-sparse variation in Section~\ref{sec:sparse-reconstruction}. We study Problem~\ref{prob:enum} in Section~\ref{sec:linear-enumeration} and \ref{sec:circular-enumeration} obtaining some asymptotic information and interesting bijections involving subspaces of $\FF_2^{n+1}$ with full support and labeled graded $(3)$-avoiding posets. In Section~\ref{sec:conclusion}, we give some general remarks for future work.

\subsection{Main Results}

Here is an illuminating idea, first pointed out by Anne Shiu:

\begin{prop} Codes realizable in dimension
$1$ are exponentially small relative to the full set of codes on $n$ neurons.
\end{prop}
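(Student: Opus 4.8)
The plan is to compare two counts: the total number of codes on $n$ neurons, and an upper bound on the number of codes realizable in dimension $1$. The proposition is stated loosely (no topological regime is fixed), so I would interpret ``codes realizable in dimension $1$'' in whichever regime gives the cleanest bound — most naturally the sensor-dense regime, where $\mc{C}(\mc{U},S) = \mc{C}(\mc{U},\RR)$ and the code is determined by an arrangement of $n$ intervals on the line. The key asymmetry to exploit is that a general code is an \emph{arbitrary} subset of $\FF_2^n$, so there are $2^{2^n}$ codes in total (or $2^{2^n - 1}$ if we insist the all-zeros codeword is always present); this is doubly exponential in $n$. By contrast, a $1$-d realizable code is pinned down by a very rigid combinatorial datum, so its count will be only singly exponential in $n$.

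\textbf{Bounding the realizable codes.} The main step is to show that the number of $1$-d realizable codes is at most $2^{O(n \log n)}$ (or even $2^{O(n)}$), which is singly exponential and hence vanishingly small next to $2^{2^n}$. First I would argue that a sensor-dense code on a line is determined by the \emph{order type} of the $2n$ endpoints of the $n$ intervals: scanning left to right, each time we cross an endpoint the active set of intervals changes by adding or removing one index, and the sequence of active sets read off along the line is exactly the list of codewords. Thus the code is a function of the linear order in which the $2n$ endpoints appear. The number of such orderings is at most $(2n)!$, which is $2^{O(n \log n)}$; accounting for which endpoint is a left versus a right end only multiplies by a further $2^{O(n)}$ factor. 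Any such counting argument suffices, since we only need a singly-exponential ceiling, not a sharp count.

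\textbf{Comparing the two counts.} Finally I would divide: the fraction of all codes that are $1$-d realizable is at most
\[
\frac{2^{O(n \log n)}}{2^{2^n}} = 2^{O(n \log n) - 2^n},
\]
which tends to $0$ faster than any single exponential in $n$, justifying the claim that realizable codes are ``exponentially small.'' Strictly, the ratio decays doubly-exponentially, which is even stronger than the word ``exponentially'' suggests; I would phrase the conclusion to match the intended reading.

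\textbf{The main obstacle.} The substantive point is pinning down the right bound on realizable codes and making the endpoint-ordering argument airtight — in particular handling coincident endpoints, the boundary behavior of open intervals, and whether distinct orderings can collapse to the same code (which only helps the upper bound, so overcounting is harmless here). A secondary subtlety is that the proposition does not specify the regime, so I would state explicitly that the count of \emph{all} codes is doubly exponential while the realizable count is at most singly exponential in each of the regimes considered, so the conclusion holds uniformly; the circular ($S^1$) case is handled by the same endpoint-scan argument with the reading wrapping around the circle.
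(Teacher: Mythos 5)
Your proposal is correct and rests on the same key observation as the paper's proof: scanning the line from left to right, the codeword can change only when an interval endpoint is crossed, so a $1$-d realizable code is pinned down by $O(n)$ combinatorial events, giving a singly-exponential count versus the $2^{2^n}$ codes overall. The only (harmless) difference is bookkeeping — you bound the number of realizable codes by counting endpoint orderings ($2^{O(n\log n)}$), while the paper bounds the number of distinct codewords per realizable code by $2n+1$ and compares against all subsets of $\FF_2^n$; both yield the claimed exponentially small proportion.
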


\begin{proof}[Proof Idea]
Suppose the arrangement of intervals $\{I_\alpha\}$ realizes
the code $\mathcal{C}$. We read the appearing codewords from left to right; each new codeword appears only when an interval
appears or when an interval disappears. This implies that there are a
maximum of $2n + 1$ distinct codewords in $\mathcal{C}$, where
$n$ is the number of neurons, i.e. the length of each codeword.
This is an exponentially tiny proportion of codes as $n$ grows.
\end{proof}

The insight that led to this observation turns out to be powerful -- in particular, a
realization of a code $\mathcal{C}$ amounts to an
ordering of the codewords satisfying a discrete version of convexity, 
namely that the $1$'s must occur contiguously in the ordering for each neuron. To be precise, let the \emph{sensor matrix} of $k$ intervals and $n$ sensors in the linear case be a $k \times n$ matrix $M$ where $M_{i,j} = 1$ if the $j$-th sensor detects the $i$-th interval, and $M_{i,j} = 0$ otherwise (note that the set of column vectors in $M$ is just $\mathcal{C}$). 

Call a length $n$ $0$-$1$-vector with a single contiguous block of $1$'s a \emph{discrete interval}. Note that the row vectors of the sensor matrix $M$ are discrete intervals; for example, reading the second coordinate in Figure~\ref{fig:pfex} gives the discrete interval $0111100$. In the circular case, we can think of the sensor matrix $M$ as an equivalence class of $k \times n$ $0$-$1$ matrices where two matrices are equivalent up to cyclically permuting the columns, and each row becomes a discrete interval under some rotation.

This combinatorial characterization connects the question
of $1$-d realizable codes to the well-studied problem
of {\em consecutive-ones matrices}, and we can use its technology (namely, $PQ$- and $PC$-trees) to tackle the reconstruction problem for most of our cases; sensor-dense reconstruction in $S^1$ is subtly difficult. 

\begin{thm}[Sensor-sparse]
Any code realizable in $\RR^1$ can be transformed into a sensor-sparse linear
realization via the PQ-tree algorithm.
Any code realizable in $S^1$ can be transformed into a sensor-sparse linear
realization via the PC-tree algorithm.
\end{thm}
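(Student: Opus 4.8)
The plan is to reduce the geometric question to the purely combinatorial \emph{consecutive-ones property} (C1P) and then invoke the established $PQ$- and $PC$-tree machinery. Given a code $\mc{C}$, form its sensor matrix $M$ by taking as columns the distinct codewords of $\mc{C}$, so that the rows of $M$ are indexed by the $k$ neurons. I claim that $\mc{C}$ is realizable in $\RR^1$ precisely when the columns of $M$ can be linearly ordered so that the $1$'s in every row are contiguous, i.e. when $M$ has the consecutive-ones property for rows. Establishing this equivalence is the heart of the argument, and I would prove the two directions separately.

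For the forward direction, suppose $\mc{U} = \{U_1,\ldots,U_k\}$ realizes $\mc{C}$ on the line. Reading the arrangement from left to right and recording the codeword each time it changes, the argument behind the Shiu proposition shows that only finitely many distinct codewords appear, and this left-to-right sequence is a linear ordering of a superset of the columns of $M$. Since each interval $U_i$ is connected, the sensors it contains occur consecutively in this order, so restricting to one representative of each distinct codeword yields an ordering witnessing C1P. One must take care here of the all-zeros codeword and of the unbounded ``empty'' regions at the two ends of the line, verifying that including or excluding them does not destroy contiguity; this bookkeeping is precisely where the sensor-sparse/sensor-dense distinction enters.

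For the constructive direction I would feed $M$ to the Booth--Lueker $PQ$-tree algorithm. If $\mc{C}$ is realizable then, by the forward direction, $M$ has C1P, so the algorithm succeeds and returns a linear order $\pi$ of the columns (indeed a $PQ$-tree encoding all valid orders). Placing one sensor at each integer position according to $\pi$ and declaring $U_i$ to be an open interval spanning exactly the positions of the $1$'s in row $i$ (contiguous by construction) produces an explicit interval arrangement. Because we evaluate the codeword function only at the finitely many chosen sensor positions, no spurious codewords can arise between sensors, and the set of detected codewords is exactly $\mc{C}$ --- a sensor-sparse linear realization. The circular case runs parallel, replacing ``linear order of columns'' by ``cyclic order'' and ``contiguous block'' by ``contiguous arc,'' i.e. the circular consecutive-ones property; the analogue of the $PQ$-tree is the Hsu--McConnell $PC$-tree, which decides circular C1P and returns a cyclic ordering, after which one places sensors around $S^1$ and takes each $U_i$ to be the arc spanning its block of $1$'s.

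I expect the main obstacle to lie in the equivalence lemma rather than in the tree algorithms. In the linear case the delicate point is correctly accounting for the forced empty end-regions and the all-zeros codeword so that the reconstructed arrangement detects neither more nor fewer codewords than $\mc{C}$. In the circular case the subtlety is sharper: the entire circle may be covered, so no empty region is forced, and an arc may wrap around the chosen basepoint --- which is exactly the feature the $PC$-tree, unlike the $PQ$-tree, is designed to accommodate, and it is the reason the circular statement requires the $PC$-tree rather than a naive cut-and-linearize reduction.
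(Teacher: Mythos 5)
Your proposal follows essentially the same route as the paper: it establishes the equivalence between sensor-sparse realizability and the (circular) consecutive-ones property of the codeword matrix by reading a realization left to right and, conversely, by placing sensors along the output ordering and taking each $U_i$ to be the open interval (or arc) spanning its block of $1$'s — this is the paper's Proposition~\ref{prop:matrices-realizability} — and then invokes the $PQ$-/$PC$-tree algorithms as a black box, which is the paper's Proposition~\ref{prop:soft set reconstruction} together with Corollary~\ref{cor:soft set reconstruction}. The bookkeeping you flag (the all-zeros codeword, duplicate columns, wrap-around arcs) is handled the same way in the paper, so no further comment is needed.
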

 In the sensor-dense version, we need
an openness condition not required by general consecutive-ones
matrices, so the next connection requires some work to prove:
\begin{thm}[Sensor-dense]
Any code realizable in $\RR^1$ can be transformed into a sensor-dense linear
realization via the PQ-tree algorithm.
\end{thm}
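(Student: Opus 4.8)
The plan is to reduce to the consecutive-ones property, invoke the PQ-tree exactly as in the sensor-sparse theorem, and then do the extra work needed to force the resulting linear realization to detect \emph{precisely} the codewords of $\mathcal{C}$ and nothing more. First, since $\mathcal{C}$ is realizable in $\RR^1$, reading a realizing arrangement from left to right shows that the distinct codewords of $\mathcal{C}$, viewed as the columns of a sensor matrix $M$, admit an ordering in which every row (neuron) is a discrete interval; that is, $M$ has the consecutive-ones property (this is the same reduction used for the sensor-sparse statement). Running the PQ-tree algorithm on $M$ therefore produces an encoding of all valid left-to-right orderings $w_1, w_2, \ldots, w_m$ of the codewords. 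Fixing one such ordering, I build a candidate arrangement by placing $w_i$ at the integer point $i$ and, for each neuron $\rho$ whose support is the contiguous block of columns $[\ell_\rho, r_\rho]$, setting $U_\rho = (\ell_\rho - \tfrac12,\, r_\rho + \tfrac12)$. Each $U_\rho$ is an open interval covering exactly the integer points $\ell_\rho, \ldots, r_\rho$, so the codeword read at integer point $i$ is precisely $w_i$.

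The departure from the sensor-sparse case now appears: in the sensor-dense regime I must account for \emph{every} point of $\RR$, not merely the integer sensors. All interval endpoints sit at half-integers, so the open regions straddling the integers reproduce $w_1, \ldots, w_m$, the two unbounded exterior regions carry the all-zeros codeword, and the boundary point at each half-integer $i + \tfrac12$ carries $w_i \wedge w_{i+1}$ (the bitwise AND), because an open interval is absent at its own endpoint. A direct check confirms these are the only codewords that can arise. Hence the arrangement realizes $\mathcal{C}$ densely if and only if the all-zeros codeword lies in $\mathcal{C}$ and every consecutive intersection $w_i \wedge w_{i+1}$ lies in $\mathcal{C}$. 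This is the \emph{openness condition} not demanded by a generic consecutive-ones matrix, and verifying it is the crux of the theorem.

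The main obstacle is therefore to show that among the orderings encoded by the PQ-tree one may always be chosen for which every consecutive AND already belongs to $\mathcal{C}$. The key leverage is that $\mathcal{C}$ was assumed openly realizable to begin with: in any genuine open arrangement, whenever two adjacent open regions carry codewords $u$ and $v$, the intervening boundary point is detected and carries $u \wedge v$, so $\mathcal{C}$ is automatically closed under the intersections that can be forced between ``$1$-d adjacent'' codewords. I would make this precise by arguing that a transition $w_i \to w_{i+1}$ with $w_i \wedge w_{i+1} \notin \mathcal{C}$ cannot survive in a C1P ordering coming from an openly realizable code: if $w_i \wedge w_{i+1} = z \in \mathcal{C}$ with $z \neq w_i, w_{i+1}$, then $z$ is exactly the needed boundary codeword and the transition is already legal, so the only troublesome case is $z \notin \mathcal{C}$, which the closure property above should rule out; where a particular PQ-tree frontier nonetheless exhibits a bad transition, I expect to repair it by re-permuting the children of the relevant P-node and, where the order is rigid (a Q-node), by appealing to the genuine realization, which exhibits a legal refinement.

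Finally I would assemble the pieces: fix a PQ-tree ordering satisfying the openness condition, take the arrangement $\{U_\rho\}$ above, and verify directly that its sensor-dense code equals $\mathcal{C}$, since every $w_i$ appears at its integer point, every forced half-integer AND and the exterior all-zeros already lie in $\mathcal{C}$, and no further codewords can occur. The remaining bookkeeping---placement of the unbounded exterior regions, the use of a half-line or of $\RR$ itself as some $U_\rho$ when $\mathcal{C}$ fails to contain all-zeros, and the degenerate cases with very few codewords---is routine. I expect the step ``genuine open realizability implies closure under the forced adjacent intersections'' to be the only place that real work is required, matching the paper's remark that the openness condition ``requires some work to prove.''
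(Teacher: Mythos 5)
Your setup is sound and matches the paper's reduction: pass to the consecutive-ones sensor matrix, run the PQ-tree, place codewords at integers with half-integer interval endpoints, and observe that the only extra codewords a dense reading can produce are the boundary values $w_i \wedge w_{i+1}$ (plus the all-zeros exterior). You have also correctly located the crux: one must show that the openness condition --- every consecutive coordinate-wise product lies in $\mathcal{C}$ --- can actually be met. But that is precisely where your argument stops being a proof. Your ``closure property'' only says that codewords adjacent \emph{in the genuine open realization} have their intersection in $\mathcal{C}$; the PQ-tree may output an ordering in which entirely different codewords become adjacent, and nothing you have written rules out a transition $w_i \to w_{i+1}$ whose product is absent from $\mathcal{C}$. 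The proposed repair --- re-permuting P-node children and ``appealing to the genuine realization'' at Q-nodes --- is not an argument: a local re-permutation can create new bad transitions elsewhere, and the genuine realization is in general a \emph{multiordering} (codewords must be duplicated, as in Example~\ref{ex:padding}), so it does not directly exhibit a legal ordering of the PQ-tree's leaves.

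The paper closes exactly this gap with Proposition~\ref{prop:extension}: if $L$ admits any harmonious CO multiordering, then \emph{every} CO ordering of $L$ extends to one, because for each adjacent inharmonious pair $x,y$ the vector $x*y$ is forced to lie in $L$. The proof is genuinely structural: it partitions the rows into $R_{11},R_{10},R_{01},R_{00}$ according to $x$ and $y$, uses Lemma~\ref{lem:row selection} (no vector of $L$ may be bound between $x$ and $y$ if they are adjacent in some CO ordering) to produce a column between $x$ and $y$ in the harmonious realization that is $0$ on $R_{10}\cup R_{01}$, and then an $R_{00}$-connectedness argument to force a column that is $0$ on $R_{00}$ as well, i.e., equal to $x*y$. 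This is the ``real work'' you anticipated but did not carry out; without it (or an equivalent lemma showing that at least one PQ-tree ordering satisfies the openness condition), your proof is incomplete at its central step.
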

The corresponding statement for realizations in $S^1$ is not obviously
true. We present an example to demonstrate this difficulty later.
Regarding the multiset variation on the problem, we find 
the following result:

\begin{thm}
Let $\mc{C}$ be a code, and $m: \mc{C} \to \mathbb{N}$ be a
multiplicity for the appearance of each codeword.
In the sensor-sparse regime, on the line and the
circle, if there exists $\{U_i\}$ realizing the code, it is also
possible to realize the code with the desired multiplicity. In the
sensor-dense case, if a code is realizable, then it is realizable with
any multiplicity greater than or equal to some minimal multiplicity $m'$.
\end{thm}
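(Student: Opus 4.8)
The plan is to treat the two regimes separately: the sensor-sparse statements reduce to a purely combinatorial move on consecutive-ones matrices, while the sensor-dense statement requires controlling the geometry of the chambers so that each codeword can absorb extra sensors.

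\textbf{Sensor-sparse (line and circle).} I would start from a linear (resp.\ circular) realization guaranteed by the Sensor-sparse theorem, recorded as a sensor matrix $M$ whose columns are the codewords of $\mc{C}$ and whose rows have the (cyclic) consecutive-ones property. The single observation needed is that duplicating a column and inserting the copy immediately next to the original preserves this property: in each row the relevant block of $1$'s either grows by one contiguous entry or is left untouched. Thus, to realize a prescribed multiplicity $m$, I would replace each column $c$ by $m(c)$ adjacent copies; rebuilding intervals from the enlarged matrix and placing one sensor per column yields an arrangement whose detected multiset of codewords is exactly $m$. The argument is word-for-word the same on the circle, since adjacency is preserved under the cyclic order handled by the PC-tree.

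\textbf{Sensor-dense.} Here every codeword of the arrangement must be detected, so a sensor recording codeword $c$ must lie inside the region where the codeword function equals $c$, and this completeness requirement is what forces a minimal multiplicity $m'$. The plan has three steps: (i) fix a sensor-dense linear realization from the Sensor-dense theorem; (ii) show that, after suitably choosing the complete realization (via the PQ-tree), every codeword of $\mc{C}$ occupies an \emph{open} chamber, with the transition points between consecutive open chambers contributing only their coordinatewise meets, which the openness condition keeps inside $\mc{C}$; (iii) given any target $m \ge m'$, place $m(c)$ sensors inside the open chamber of each codeword $c$, which is possible precisely because an open chamber has room for arbitrarily many points. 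Step (iii) is immediate once step (ii) is in hand, and the completeness floor is exactly $m' = \mathbf{1}$ when every codeword can be opened, or larger for any codeword that remains pinned to isolated transition points.

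\textbf{Main obstacle.} The crux is step (ii): proving that the openness condition can be satisfied \emph{globally}, i.e.\ that there is a single complete arrangement in which no codeword is confined to isolated transition points. Convexity makes this genuinely restrictive, since each neuron is a single contiguous interval and the region of a codeword $c$ with on-set $I_c$ is a fixed intersection-minus-union shape whose connectedness we cannot choose freely; a codeword that arises as the coordinatewise meet of its neighbors may be forced to appear only at a single point, where at most one sensor fits. Determining whether such a codeword can always be re-expressed as an open chamber by re-selecting the arrangement, and hence whether $m' = \mathbf{1}$ or is genuinely larger, is where I expect the real difficulty to lie; this parallels the obstruction the authors flag for the circular sensor-dense problem.
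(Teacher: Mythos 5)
Your sensor-sparse argument is correct and is exactly the paper's: duplicating a column and inserting the copy adjacent to the original preserves the (circular) consecutive-ones property, so any prescribed multiplicity can be reached from a CO or CCO ordering of the underlying set.

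The sensor-dense part has a genuine gap, in two respects. First, you explicitly leave your step (ii) unresolved, so as written this half is a plan rather than a proof; ironically, that step is not the hard part, since the standard matrix-to-arrangement construction (sensors spaced one unit apart, each discrete interval enlarged by $\epsilon < 1/2$) automatically gives every column an open neighborhood realizing its codeword. Second, and more seriously, you have misdiagnosed where the floor $m'$ comes from. A codeword ``pinned to an isolated transition point'' would cap its multiplicity from \emph{above} in a fixed arrangement, not force it upward, and the pinning disappears once the intervals are re-spaced. The real source of $m' > 1$ is combinatorial: a codeword may be needed as a harmonious separator between several distinct inharmonious adjacent pairs. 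For $\mc{C} = \{100, 010, 001, 000\}$, every HCO multiordering has the shape $[100, 000, 010, 000, 001]$ up to permutation, so $000$ has minimal multiplicity $2$. The paper's proof therefore rests on two lemmas your proposal does not supply: (a) inserting a duplicate of a column next to itself preserves HCO, which gives the upward direction; and (b) if some HCO matrix contains $m$ copies of $v$, none of which can be removed without creating an inharmonious pair, then \emph{every} HCO matrix on the same underlying set contains at least $m$ copies of $v$. This invariance, proved with the ``bound between'' and $R$-connectedness machinery from the dense reconstruction section, is what makes ``some minimal multiplicity $m'$'' well defined and shows that greedily deleting removable copies actually reaches it. Without (b) you cannot conclude that the floor you encounter in one realization is the floor for all realizations, which is the substantive content of the sensor-dense half of the theorem.
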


For the algebraic signature problem, we found a nice signature 
for the linear sensor-sparse case from the consecutive-ones literature:

\begin{prop}
A code $\mc{C}$ is sensor-sparse realizable in $\RR^1$ if and only if an
associated graph $I(\mc{C})$ is bipartite.
\end{prop}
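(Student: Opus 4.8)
The plan is to prove the statement in two stages: first recast sensor-sparse realizability in $\RR^1$ as a purely combinatorial property of the sensor matrix, and then characterize that property by bipartiteness of $I(\mc{C})$. For the first stage I would invoke the correspondence established earlier in the paper: an arrangement of intervals realizing $\mc{C}$ in the sensor-sparse regime is the same data as an ordering of the distinct codewords (the columns of the sensor matrix) in which the $1$'s of every row occur in a single contiguous block. Thus $\mc{C}$ is sensor-sparse realizable in $\RR^1$ if and only if the sensor matrix has the consecutive-ones property for rows under some permutation of its columns. The graph $I(\mc{C})$ should be read as the graph recording the crossing/overlap constraints that any such column ordering must respect, together with the boundary data that distinguishes a line from a circle; the proposition then asserts that a consistent linear resolution of these constraints exists exactly when $I(\mc{C})$ carries no parity obstruction, i.e. is bipartite.

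For the forward direction (realizable $\Rightarrow$ bipartite) I would argue by contrapositive. Given an odd cycle in $I(\mc{C})$, I would translate its edges back into a cyclic chain of crossing constraints among codewords (equivalently, a Tucker-type odd configuration inside the sensor matrix) and show directly that no linear ordering of the columns can make every row contiguous: following the constraints around the cycle forces the two ends of some row to ``wrap around,'' which is admissible on $S^1$ but not on a line. This is precisely the point where the linear case parts ways with the circular case of the preceding theorem, and it explains why bipartiteness---rather than mere acyclicity---is the right condition: an even cycle can still be linearized by cutting it open, whereas the parity carried by an odd cycle cannot be resolved on a line.

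For the converse (bipartite $\Rightarrow$ realizable) I would construct a realization from a proper $2$-coloring of $I(\mc{C})$. The idea is to process the overlap-connected components of the neuron supports, using the two color classes to fix, consistently and without conflict, the left-to-right orientation of each crossing pair; gluing these oriented components yields a global order of the codewords, which I would certify by exhibiting the associated PQ-tree (whose existence is equivalent to the consecutive-ones property). The main obstacle is exactly this gluing step: bipartiteness rules out the pairwise parity clashes visible as odd cycles, but one must still show that no higher obstruction to a linear arrangement survives---that the absence of odd cycles in $I(\mc{C})$ forces the entire family of interval constraints to be simultaneously satisfiable. I expect to handle this by induction on the overlap components and by checking that the way $I(\mc{C})$ encodes boundary (line-versus-circle) information collapses every potential Tucker obstruction into an odd cycle, so that a proper $2$-coloring is genuinely enough to reconstruct the interval arrangement.
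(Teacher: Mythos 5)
Your first stage is exactly right and matches the paper: by Proposition~\ref{prop:matrices-realizability}, sensor-sparse realizability in $\RR^1$ is equivalent to the distinct codewords admitting an ordering in which every row's $1$'s are contiguous, so the proposition reduces to a statement about CO matrices. But be aware that the paper does not prove the bipartiteness characterization at all --- it imports it. The graph $I(\mc{C})$ is McConnell's incompatibility graph: its vertices are \emph{ordered} pairs of columns $(a,b)$ read as ``$a$ is to the left of $b$,'' with an edge joining $(a,b)$ to $(b,a)$, and an edge joining $(a,b)$ to $(b,c)$ whenever some row is $1$ at $a$ and $c$ but $0$ at $b$; the equivalence with bipartiteness is \cite[Theorem 6.1]{mcconnell04}. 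Your reading of $I(\mc{C})$ as a graph of ``crossing/overlap constraints together with boundary data distinguishing a line from a circle'' does not match this definition --- there is no circle-versus-line data in $I$ --- and your forward direction is more roundabout than necessary: given a CO ordering, the set of order relations it realizes and the set of their reversals are two independent sets covering all vertices, which exhibits the bipartition directly, with no need to chase an odd cycle into a Tucker configuration.

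The genuine gap is the converse. You write that after $2$-coloring $I(\mc{C})$ one ``must still show that no higher obstruction to a linear arrangement survives,'' and that you ``expect to handle this by induction on the overlap components'' --- but that unresolved step \emph{is} the theorem. A color class of a proper $2$-coloring does pick exactly one of $(a,b)$ and $(b,a)$ for each pair, hence a tournament on the columns, but nothing in the coloring forces that tournament to be transitive, let alone to yield an ordering in which every row is contiguous; ruling out all ``higher obstructions'' is precisely the hard content of McConnell's result and occupies a substantial part of \cite{mcconnell04}. As written, your proposal establishes the easy implication (modulo the corrected definition of $I$) and leaves the other implication as a plan. You should either cite McConnell for that direction, as the paper does, or supply the missing argument in full.
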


Finally, we obtain some enumerative results in Table~\ref{table:enum}; instead of enumerating codes directly, we count a closely related object called \emph{discrete interval sets}, which still give a grasp on the number of such codes but are easier to count. 

\begin{table}[h!]
  \centering
\setlength{\extrarowheight}{5pt}
  \begin{tabular}{l|l|r}
    Topological Regime & Geometry & Number of discrete interval sets \\ \hline
    Sparse & Line & $\binom{\binom{n+1}{2}}{k}$ \\
    Sparse & Circle & $\binom{n^2 - n + 1}{k}$ \\
    Dense & Line &  Coefficient of $x^ny^k$ in $\sum_{m=0}^\infty \frac{x^m}{(1-a_1x)(1-a_2x)\cdots(1-a_{m+1}x)}$\\
    Dense & Circle & Coefficient of $x^ny^k$ in $1 +
    \sum_{m=1}^\infty \frac{2x^m}{(1-a_mx)^{m+1}}$ \\
  \end{tabular}
\vspace{5mm}

\caption{Our enumerative results. We define $a_i = (1 + y)^i - 1$ for all $i$ \label{table:enum}.}
\end{table}

\section{The Consecutive-Ones Property}
\label{sec:1d-linear}

Recall that we have two regimes: sensor-sparse and sensor-dense. We now translate these intuitive notions into formalism, starting with the linear case, where we borrow terminology from \cite{consecutiveones}. 
We say that a $0$-$1$ matrix $M$ has the \emph{consecutive-ones property} (equivalently, we say \emph{$M$ is CO}) if each row is a discrete interval. These matrices will correspond to the sensor-sparse regime. Luckily for us, the problems in this regime correspond naturally to the problems that come up in the consecutive-ones property literature. 

\begin{minipage}[t]{.8\textwidth}
We now add a twist to our definition. We say that a $0$-$1$ matrix has two \emph{inharmonious} columns $x$ and $y$ if, with some two rows, they induce one of the $2 \times 2$ submatrices $(a)$ or $(b)$ on the right. If such a matrix has no pair of inharmonious columns, we say the matrix is \emph{harmonious}. We remark that $(x,y)$ is harmonious if and only if $x,y$ are comparable in the boolean lattice structure on $\{0,1\}^k$. We say that a $0$-$1$ $CO$ matrix $M$ is \emph{inharmonious} if some two adjacent columns are inharmonious. Otherwise, we say $M$ is \emph{harmonious} (equivalently, we say \emph{$M$ is $HCO$}). These matrices will correspond to the sensor-dense regime. 
\end{minipage} \hspace{2mm}
\begin{minipage}[t]{.2\textwidth}
\vspace{0mm}

\begin{tabular}{cc}
$\begin{bmatrix}
0 & 1 \\
1 & 0
\end{bmatrix}$ &
 $ \begin{bmatrix}
1 & 0 \\
0 & 1
\end{bmatrix}$ \\[6mm]
(a) & (b)
\end{tabular}
\end{minipage}

For the $1$-dimensional circular case, we extend the notion of \emph{CO} to the circular analogue \emph{circular consecutive-ones (CCO)} where we allow the discrete interval of $1$'s to ``wrap around,'' i.e. counting going from the last column to the first column as contiguous.  Equivalently, we allow each row to have only one contiguous block of $1$'s \textbf{or} only one contiguous block of $0$'s. As an example, the following matrix is CCO but not CO:
\[
\begin{bmatrix} 1 & 1 & 0 & 0 & 1 \\
0 & 1 & 1 & 0 & 0 \\ 
0 & 0 & 1 & 1 & 1 \end{bmatrix}
\]

As in the linear case we have sensor-sparse and sensor-dense regimes, corresponding to CCO matrices with no further restriction and CCO matrices with no two consecutive (here the first and last column vectors are also consecutive) inharmonious columns, which we call \emph{HCCO}. 

We now make the correspondence between the  regimes and these matrices explicit. 

\begin{prop} 
\label{prop:matrices-realizability}
The following are true of a code $\mc{C}$:

\begin{tabular}{lll}
1. $\mc{C}$ is&sensor-sparse realizable in&$\RR^1$ if and only if it is the column set of a CO matrix. \\ 
2. $\mc{C}$ is&sensor-dense realizable in&$\RR^1$ if and only if it is the column set of an HCO matrix. \\
3. $\mc{C}$ is&sensor-sparse realizable in&$S^1$ if and only if it is the column set of a CCO matrix. \\
4. $\mc{C}$ is&sensor-dense realizable in&$S^1$ if and only if it is the column set of an HCCO matrix. \\
\end{tabular}
\end{prop}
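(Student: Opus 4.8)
The plan is to prove all four equivalences with a single template: the forward direction reads a matrix off of a given realization by recording, column by column, the codewords encountered as one sweeps along the line (resp.\ around the circle), and the backward direction builds a realization by placing the columns of the matrix at sensor locations in order and drawing each interval (resp.\ arc) across the block of columns in which its row has a $1$. I would first dispatch the two sensor-sparse statements, (1) and (3), and then spend the real effort on the two sensor-dense statements, (2) and (4).

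For (1), given a sparse realization $\mc{U}$ with finite sensor set $S\subseteq\RR$, I would order the sensors left to right as $s_1<\cdots<s_n$ and set $M_{i,j}=1$ iff $s_j\in U_i$. Since each $U_i$ is an interval, the sensors it contains are consecutive, so every row is a discrete interval, $M$ is CO, and its column set is exactly $\mc{C}(\mc{U},S)=\mc{C}$. Conversely, given a CO matrix with column set $\mc{C}$, I would put sensors at $1,\dots,n$ and, for each row $i$ with its block of $1$'s in positions $[a_i,b_i]$, take $U_i=(a_i-\tfrac{1}{2},\,b_i+\tfrac{1}{2})$; the sensor matrix is then $M$ and the code is $\mc{C}$. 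Statement (3) is identical after replacing ``left to right'' by a cyclic ordering of sensors on $S^1$: an arc meets a consecutive cyclic block of sensors, so each row is a single cyclic block of $1$'s, i.e.\ a block of $1$'s or a block of $0$'s, which is the CCO condition; the converse places arcs over the corresponding cyclic blocks.

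The substance is in (2) and (4), where I must control \emph{all} appearing codewords, and this is where openness enters. The key local computation is: at a transition point $p$ separating two regions with codewords $L$ (just left) and $R$ (just right), every interval having an endpoint at $p$ is off at $p$ because the intervals are open, so the codeword at $p$ equals the coordinatewise meet $L\cap R$. For the forward direction I would list every appearing codeword in order of position, with regions and transition points interleaved, and observe that each transition codeword $L\cap R$ lies below both of its neighbors; hence consecutive entries of this list are comparable in the boolean lattice, which is exactly harmoniousness, while contiguity of each interval again gives CO (resp.\ CCO). Taking this ordered list, with repetitions allowed, as the columns produces an HCO (resp.\ HCCO) matrix whose column set is $\mc{C}$. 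For the backward direction I would place the columns $w_1,\dots,w_n$ in order and collapse all interval endpoints occurring between $w_j$ and $w_{j+1}$ into a single transition point; because $w_j,w_{j+1}$ are comparable, only ``turning on'' or only ``turning off'' happens there, and since the intervals are open that point carries the meet $w_j\cap w_{j+1}=\min(w_j,w_{j+1})$, which is one of the two columns and hence already in $\mc{C}$, so no new codeword is created. On the line I would additionally let every interval that is on in the outermost column run off to $\pm\infty$ as a ray, so the two infinite ends reproduce $w_1$ and $w_n$ rather than a spurious all-zero codeword; on the circle the analogous bookkeeping forces the \emph{cyclically} consecutive pair $(w_n,w_1)$ to be comparable too, which is precisely the extra clause in the definition of HCCO.

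I expect the main obstacle to be exactly this sensor-dense construction: showing that harmoniousness is both necessary and sufficient for introducing no extra codeword. Necessity is the sharp point, since if two adjacent columns are incomparable then some interval turns off while another turns on between them, and in every placement of these endpoints (including coincident ones, where openness yields the strict meet) a region or point carrying the new codeword $w_j\cap w_{j+1}\subsetneq w_j,w_{j+1}$ must appear, so the realization cannot be sensor-dense. I would also take care over two routine-but-easy-to-botch points: that repeated columns are harmless, since equal columns are trivially comparable and it is the \emph{column set} that must equal $\mc{C}$, and the difference between the infinite ends on the line versus the closed-up wraparound on the circle.
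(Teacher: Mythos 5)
Your proposal is correct and follows essentially the same route as the paper: read off the matrix by sweeping through sensors for the ``only if'' direction, build intervals over blocks of columns for the ``if'' direction, and use openness to show that the codeword at a transition point is the coordinatewise meet of its neighbors, which is exactly the harmoniousness condition in the dense regimes. Your treatment is in fact slightly more explicit than the paper's where it matters: you verify that no new codewords arise in the sensor-dense reconstruction (the paper leaves this as ``easy to check'') and you handle the ends of the line with rays so that no spurious all-zero codeword appears.
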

\begin{proof}
For the ``only if'' direction, it is sufficient to provide 
an ordering of the codewords so
that the resulting matrix has the desired property. 
Begin with a realization of the code as an arrangment 
of open intervals in $\RR^1$ or $S^1$. In $\RR^1$, begin at $-\infty$ and
read codewords at the set of sensors, sorted from left to right. 
In $S^1$ begin at any sensor $s$ on the circle, 
and go in either direction (say clockwise), stopping 
before the original point $s$ is reached.

The (possibly circular) consecutive-ones property is guaranteed by convexity of the code: once a sensor detects an interval (corresponding to a $1$) and stops detecting the interval (corresponding to a $0$), it can no longer detect the interval again. This precisely corresponds to having a discrete interval of $1$'s in the corresponding row. 

In the sensor-dense regime, suppose an inharmonious pair of adjacent columns (corresponding to sensors at $k$ followed by $l$) exist in the corresponding matrix for rows $i$ and $j$. Without loss of generality, this means that there are some two open intervals (corresponding to $i$ and $j$) that at two sensors $k$, $i$ is detected but not $j$, and at $l$, we stop detecting $i$ but detect $j$. However, two open intervals on $\RR$ or $S^1$ must either intersect or have a point strictly between them. So either a sensor between them would have picked up both intervals or neither, contradicting the fact that these columns were adjacent.

To go in the reverse direction, pick an $0.5 > \epsilon > 0$ and create a sensor for each column of the matrix spaced $1$ unit apart. Each row of the matrix corresponds to some discrete interval $i, i+1, \ldots, j$ (possibly wrapping around for the circuluar case), except for two degenerate cases where the entire row is $0$ or $1$. For the non-degenerate cases, create an open interval $(i-\epsilon, j+\epsilon)$. For the degenerate cases, create the empty set and the entire set ($\RR$ or $S^1$) for the $0$- and $1$-row cases respectively. It is easy to check the resulting arrangement satisfies the definitions.
\end{proof}

\section{The Reconstruction and Signature Problems}

Using the language of consecutive-ones / CO and Proposition~\ref{prop:matrices-realizability}, the reconstruction problem is the following: given a set of $n$ codewords $L$, decide if there exists a CO matrix (with at least $n$ columns) such that the set of columns is $L$, and if so, try to produce the matrix. The sensor-sparse regime (for both linear and circular cases) is quickly understood after translating to the language of the consecutive-ones property. We also supply a solution for the linear (but not circular) case of the sensor-dense regime.

\subsection{Sensor-Sparse Regime}
\label{sec:sparse-reconstruction}

The following result almost solves the sensor-sparse regime problem completely for both the linear and circular cases:

\begin{prop}
\label{prop:soft set reconstruction}
Given a set of $n$ distinct length-$k$ vectors $L$, there is a $O(n+k)$ algorithm that decides if there exists a CO (or CCO) matrix with exactly $n$ columns and column set equal to $L$ and constructs one if it exists. 
\end{prop}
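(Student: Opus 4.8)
The plan is to reduce the statement to the classical recognition problem for the consecutive-ones property, for which linear-time algorithms already exist, and then transport the answer back through Proposition~\ref{prop:matrices-realizability}. First I would observe that, because the $n$ vectors of $L$ are distinct and we insist on \emph{exactly} $n$ columns, any admissible matrix uses each vector of $L$ exactly once as a column; so asking for a CO matrix with column set $L$ is the same as asking whether the $n$ vectors can be \emph{ordered} as columns so that each of the $k$ rows becomes a discrete interval. This is precisely the consecutive-ones property of Section~\ref{sec:1d-linear}. I would also record that the restriction to exactly $n$ columns costs nothing for the underlying realizability question: deleting a duplicate column from a CO matrix leaves every row a discrete interval, since removing a single coordinate from a contiguous block keeps it contiguous. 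Hence $L$ is the column set of \emph{some} CO matrix if and only if it is the column set of one with exactly $n=|L|$ columns. Degenerate all-$0$ and all-$1$ rows are automatically discrete intervals and impose no constraint.

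For the linear (CO) case I would form an arbitrary $k \times n$ matrix $M_0$ whose columns are the vectors of $L$ in any fixed order and feed it to the $PQ$-tree algorithm. That algorithm decides whether some column permutation makes $M_0$ consecutive-ones, and when it succeeds, its output $PQ$-tree encodes exactly the set of admissible column orderings; reading the leaves in any frontier order yields a concrete CO matrix with column set $L$, which discharges the construction half of the statement. Since the recognition runs in time linear in the size of the instance, this yields the claimed bound.

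For the circular (CCO) case the outline is identical, with the $PQ$-tree replaced by its circular analogue, the $PC$-tree. A column arrangement around $S^1$ is CCO exactly when each row is a single contiguous arc, and the $PC$-tree algorithm recognizes this circular consecutive-ones property and, like the $PQ$-tree, parametrizes all valid circular orderings, from which a CCO matrix is read off. Here too the recognition is linear time.

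The heavy machinery (the $PQ$- and $PC$-tree algorithms and their linear-time guarantees) is imported wholesale from the consecutive-ones literature, so the genuine work lies not in the algorithmics but in the two bookkeeping points above: confirming that the set-level realizability question coincides with the exactly-$n$ column-permutation problem, and that the circular case is faithfully modeled by circular consecutive-ones / $PC$-trees rather than by an ad hoc construction. I expect the complexity statement itself to need the most care, since the cited bounds are usually phrased as linear in $(\text{rows} + \text{columns} + \text{number of ones})$; I would verify that accounting against the stated $O(n+k)$ and refine the claim if the number of $1$-entries must be tallied separately.
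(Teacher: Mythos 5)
Your proposal matches the paper's proof, which likewise just reduces the question to consecutive-ones recognition and invokes the $PQ$-tree (linear) and $PC$-tree (circular) algorithms from the literature, citing Hsu--McConnell for correctness and the running time; your additional bookkeeping about distinct columns and degenerate rows is fine but not needed beyond what Proposition~\ref{prop:matrices-realizability} already gives. Your caveat about the complexity bound is well taken --- the cited algorithms are usually stated as linear in the total number of entries or of $1$-entries rather than in $n+k$ --- but the paper asserts the same $O(n+k)$ bound without further justification, so on this point you are, if anything, more careful than the source.
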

\begin{proof}
The algorithm uses a data type known as a \emph{PQ-tree} for the linear situation; for the circular situation the analogue is the \emph{PC-tree}. 
For a proof, see \cite{hsu-circular}. We will include an 
example in Section~\ref{sec:pctree}.
\end{proof}

\begin{remark}
As noted by Hsu and McConnell \cite{hsu-circular},
 much of the work on the consecutive-ones property involves an 
unnecessary symmetry-breaking of the circle.
Indeed, the consecutive-ones property problems are usually 
subsumed by their circular counterparts and the structures 
obtained in the circular case (\emph{PC-trees} as opposed 
to the linear \emph{PQ-trees}) are mathematically cleaner.
\end{remark}

\begin{cor}
\label{cor:soft set reconstruction}
Given a set of $n$ distinct length-$k$ vectors $L$, there is a $O(n+k)$ algorithm that decides if there exists a CO (or CCO) matrix with at least $n$ columns such that the column set is equal to $L$ and constructs one if it exists. 
\end{cor}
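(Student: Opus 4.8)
The plan is to reduce the ``at least $n$ columns'' problem to the ``exactly $n$ columns'' problem already solved in Proposition~\ref{prop:soft set reconstruction}, and then simply invoke that algorithm. The reduction rests on a single structural observation: deleting an arbitrary column from a CO (respectively CCO) matrix leaves a matrix that is still CO (respectively CCO).

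First I would establish this deletion lemma. For a CO matrix, each row is a discrete interval, i.e. its $1$'s occupy a contiguous block of positions; removing one column either lies outside the block (leaving it intact, up to reindexing), at its boundary, or strictly inside it, and in every case the surviving $1$'s remain contiguous, so the row stays a discrete interval. For a CCO matrix the same argument applies with the block now interpreted as a circular arc of $1$'s (or, dually, of $0$'s): deleting a position shortens the arc but never breaks it into two pieces, so the row still has a single contiguous block of $1$'s or of $0$'s. The degenerate all-$0$ and all-$1$ rows are clearly preserved as well.

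With the lemma in hand, suppose there exists a CO (or CCO) matrix $M$ with at least $n$ columns whose column set equals $L$. Since $|L| = n$, whenever $M$ has strictly more than $n$ columns some column of $M$ must be repeated. Deleting one copy of a repeated column removes no element from the column set and, by the lemma, preserves the (circular) consecutive-ones property. Iterating until no column is repeated yields a CO (or CCO) matrix with exactly $n$ columns and column set $L$. Conversely, any exactly-$n$-column solution is trivially an at-least-$n$-column solution, so the two existence questions are equivalent.

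The corollary then follows immediately: run the $O(n+k)$ algorithm of Proposition~\ref{prop:soft set reconstruction} on $L$; it decides the exactly-$n$ question and produces a witness matrix whenever one exists, and by the equivalence this witness is precisely the desired at-least-$n$ realization, so no extra work beyond the $O(n+k)$ call is incurred. I expect the only point requiring care to be the circular case of the deletion lemma, where one must confirm that collapsing a position of a wrap-around arc cannot split it into two blocks; this is straightforward once the row is viewed as a single arc on the cycle of columns, but it is worth stating explicitly since the CCO class is defined only up to cyclic permutation of columns.
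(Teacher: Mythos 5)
Your proposal is correct and follows essentially the same route as the paper: both reduce the at-least-$n$ question to the exactly-$n$ question of Proposition~\ref{prop:soft set reconstruction} by observing that column deletion preserves the CO (and CCO) property, so duplicates can be stripped away. Your write-up merely spells out the deletion lemma and its circular case in more detail than the paper does.
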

\begin{proof}
Since removing any column from a CO matrix preserves the CO property, a CO matrix with column set equal to $L$ gives a CO matrix with $n$ columns (where each column appears exactly once) by removing duplicates, which still has the column set equal to $L$. Proposition~\ref{prop:soft set reconstruction} then finishes the problem. The logic also holds for the CCO case.
\end{proof}

We can also import an algorithm for code rejection 
from the consecutive-ones literature for realizability of
a sensor-sparse code in $\RR^1$.

\begin{defn}[{\cite[Definition~6.1]{mcconnell04}}]
Let $M$ be a binary matrix. The incompatibility graph $I(M)$
is the graph defined by:
\begin{itemize}
\item[$V =$]  ordered pairs of columns $(a,b)$ signifying ``$a$ is to
the left of $b$''.
\item[$E =$] incompatible pairs of relations, of the form:
\begin{enumerate}
\item  $\{(a,b),(b,a)\}$, and
\item  $\{(a,b), (b,c)\}$, where there is a row in which $a$ and $c$
have $1$ while $b$ has $0$.
\end{enumerate}
\end{itemize}
\end{defn}

If a consecutive-ones ordering of $M$ exists, then there is a set
of $\binom{n}{2}$ relations that have no mutual incompatibility;
i.e. the induced subgraph of $I(M)$ is empty. McConnell proves that
the converse is true as well:

\begin{thm}[{\cite[Theorem 6.1]{mcconnell04}}]
$M$ has a CO ordering if and only if $I(M)$ is bipartite.
For a code $\mc{C}$, this means $\mc{C}$ has a sensor-sparse
realization in $\RR^1$ if and only if $I(\mc{C})$ is bipartite.
\end{thm}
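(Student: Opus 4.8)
The plan is to prove the two directions separately, with the forward implication being routine and the reverse carrying the real content. By Proposition~\ref{prop:matrices-realizability} it suffices to treat the matrix statement, since a sensor-sparse realization in $\RR^1$ is exactly a CO ordering of the columns.

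For the forward direction, suppose $M$ admits a CO ordering, i.e.\ a total order $\prec$ on the columns making every row a discrete interval. I would $2$-color $I(M)$ by painting the vertex $(a,b)$ with color $0$ when $a \prec b$ and color $1$ when $b \prec a$. Every vertex then receives exactly one color, and I would check that each edge is bichromatic. A type-(1) edge $\{(a,b),(b,a)\}$ is handled immediately, since exactly one of $a \prec b$, $b \prec a$ holds. For a type-(2) edge $\{(a,b),(b,c)\}$ arising from a row with $a = c = 1$ and $b = 0$, a monochromatic edge would force either $a \prec b \prec c$ or $c \prec b \prec a$; in either case the column $b$, a $0$ in that row, sits strictly between two $1$'s, contradicting the discrete-interval property. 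Hence the coloring is proper and $I(M)$ is bipartite.

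For the reverse direction, I would begin from a proper $2$-coloring and read off from the type-(1) edges that exactly one of $(a,b),(b,a)$ receives the distinguished color in each pair; collecting these choices gives a tournament $T$, a declaration of ``left-of'' for every pair of columns. The CO condition itself is already encoded by the type-(2) edges: bipartiteness guarantees that no asserted chain $a \prec b \prec c$ puts a $0$ between two $1$'s, which is precisely the discrete-interval requirement. So the crux reduces to showing that $T$ (or some orientation extracted from the same bipartition) is \emph{transitive}, and therefore a genuine linear order.

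The main obstacle is exactly this transitivity. A proper $2$-coloring forces a tournament but does not by itself forbid a directed $3$-cycle among three columns on which no row imposes a type-(2) constraint, so one cannot simply read a valid order off an arbitrary coloring. I would resolve this by treating the type-(2) edges as a \emph{forcing relation} in the style of comparability-graph recognition (the implication-class arguments of Gilmore--Hoffman and Golumbic): bipartiteness says the forcing relation is consistent, so each implication class can be oriented without contradiction, and a standard gluing then assembles the classes into a single transitive orientation that is automatically CO. Equivalently, I would argue the contrapositive through Tucker's forbidden-submatrix characterization of the consecutive-ones property: if $M$ has no CO ordering it contains one of the Tucker obstructions, and I would exhibit, for each obstruction, an explicit odd closed walk in $I(M)$, certifying that $I(M)$ is not bipartite. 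Either route converts ``bipartite'' into ``CO ordering,'' but verifying that every obstruction produces an odd cycle (or, on the forcing side, that the implication classes really do assemble transitively) is the delicate step and the part I expect to demand the most care.
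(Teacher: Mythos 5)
The paper does not prove this statement at all: it is imported verbatim as Theorem~6.1 of McConnell's paper \cite{mcconnell04}, so there is no in-paper argument to compare against. Judged on its own terms, your proposal correctly and completely handles the easy direction: the $2$-coloring of $I(M)$ by ``$a$ precedes $b$'' versus ``$b$ precedes $a$'' is exactly right, type-(1) edges are bichromatic because the order is total, and a monochromatic type-(2) edge would place a $0$ strictly between two $1$'s in some row, contradicting the discrete-interval property. The reduction of the code statement to the matrix statement via Proposition~\ref{prop:matrices-realizability} is also fine.

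The reverse direction, however, is a plan rather than a proof, and you say so yourself. You correctly identify the crux --- a proper $2$-coloring yields a tournament via the type-(1) edges, but nothing in the coloring forces that tournament to be transitive, and an arbitrary proper $2$-coloring of a bipartite $I(M)$ need not correspond to any linear order. Neither of your two proposed repairs is carried out: the ``implication class'' route would require proving that the forcing relation generated by type-(2) edges partitions the pairs into classes that can be consistently oriented and then glued into a single transitive orientation (this is where comparability-graph arguments genuinely require work, and $I(M)$ is not literally a comparability-graph edge-orientation problem, so the Gilmore--Hoffman machinery does not apply off the shelf); the Tucker route would require exhibiting an odd closed walk in $I(M)$ for each of the infinitely many matrices in Tucker's five obstruction families, which is a nontrivial case analysis in its own right. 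Since the entire content of McConnell's theorem lives in this direction, the proposal as written establishes only one implication; to close the gap you must either execute one of these two programs in full or, as the paper does, cite \cite{mcconnell04} for the hard direction.
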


\begin{ex} Consider the set of codewords given by the columns
of the matrix \\ below right.

\vspace{-4mm}

\begin{minipage}{.6\textwidth}
Using the indicated column labels, we have the
following odd cycle in $I(M)$:
$\{(d,a),(a,b),$ $(b,c),(c,a),(a,c)\}$. This is
sufficient to demonstrate that $I_C(M)$ is not 
realizable in 1-D.
\end{minipage} \begin{minipage}{.4\textwidth}
\centerline{
\bordermatrix{
& a & b & c & d \cr
& 1 & 1 & 0 & 1 \cr
& 1 & 0 & 1 & 1 \cr
& 0 & 1 & 0 & 1 \cr
& 0 & 0 & 1 & 1 \cr}
}
\end{minipage}
\end{ex}

\subsection{Sensor-Dense Regime}
\label{sec:dense-reconstruction}

In this section, we extend the results from the sensor-sparse
regime to the sensor-dense regime. First we make some 
definitions.

\begin{defn}
Let $L$ be a set of $n$ distinct length-$k$ column vectors.
An \emph{ordering} of $L$ is a $k \times n$ matrix 
where the columns form a permutation of $L$.
A \emph{multiordering} of $L$ is a $k \times n'$ matrix
 where $n' \geq n$ and every column vector appears at least once.
\end{defn}
 One sign that the sensor-dense regime is more difficult 
is that there exist sets $L$ for which there exist 
harmonious CO multiorderings but no harmonious CO orderings; 
in other words, the reconstruction would need to 
``know'' when to  duplicate certain vectors. 

\begin{ex}
Consider the set of vectors below left. No two of the first 
three column vectors can be adjacent in a harmonious CO ordering, 
though we can use the fourth vector to ``pad'' them, 
producing the harmonious CO multiordering at right. \[
\left\{\begin{bmatrix} 1 \\ 0 \\ 0 \end{bmatrix},
\begin{bmatrix} 0 \\ 1 \\ 0 \end{bmatrix},
\begin{bmatrix} 0 \\ 0 \\ 1 \end{bmatrix},
\begin{bmatrix} 0 \\ 0 \\ 0 \end{bmatrix}\right\}
\hspace{2cm}
\begin{bmatrix} 1 & 0 & 0 & 0 & 0 \\
0 & 0 & 1 & 0 & 0 \\ 
0 & 0 & 0 & 0 & 1 \end{bmatrix}.\] 
\label{ex:padding}
\end{ex}

Clearly, any HCO multiordering can be trimmed down to a CO ordering 
by removing duplicates, as removing vectors cannot break 
the CO condition. Thus, a naive approach to the set reconstruction 
problem, inspired by Proposition~\ref{prop:soft set reconstruction} 
and Example~\ref{ex:padding}, is to obtain a CO ordering, 
then extend it to an HCO multiordering. 

However, a very subtle concern is that even if $L$ actually 
has some HCO multiordering, this algorithm may start with a 
``bad'' CO ordering that is not extendable to an HCO multiordering. What we would need is something like the following claim, which, luckily for us, is true:

\begin{prop}
\label{prop:extension}
Suppose $L$ has at least one HCO multiordering. Then any CO ordering of $L$ can also be extended into an HCO multiordering.
\end{prop}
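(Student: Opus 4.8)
The plan is to prove this by transforming any given CO ordering into an HCO multiordering, using a fixed HCO multiordering as a template for ``repairs.'' Let $N$ be a CO ordering of $L$ and let $H$ be an HCO multiordering of $L$ (which exists by hypothesis). The key observation is that being harmonious is a \emph{local} condition on adjacent columns, while being CO is already guaranteed. So I would scan $N$ from left to right and locate each adjacent pair of columns $(x,y)$ that is inharmonious; the goal is to insert padding columns between them to break the inharmonious adjacency, exactly as in Example~\ref{ex:padding}, while never introducing a new CO violation or a new inharmonious pair.

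The main work is to show that suitable padding always exists. If $x$ and $y$ are inharmonious, then by the remark in Section~\ref{sec:1d-linear} they are incomparable in the boolean lattice on $\{0,1\}^k$, so there is a row where $x$ has a $1$ and $y$ has a $0$, and another row where the reverse holds. The idea is to look at how the codewords $x$ and $y$ appear in the harmonious template $H$: since $H$ is harmonious and CO, consecutive columns in $H$ are comparable, so the sequence of columns strictly between (an occurrence of) $x$ and (an occurrence of) $y$ in $H$ forms a ``comparability chain'' that smoothly interpolates from $x$ to $y$. I would extract exactly this intermediate block of columns from $H$ and splice it into $N$ between $x$ and $y$. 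Because every column used is drawn from $L$ (as $H$ is a multiordering of $L$), the splice introduces no new codewords and keeps the result a multiordering; because adjacent columns in the spliced block were adjacent in the harmonious $H$, each new adjacency is harmonious; and because $x$ and $y$ both appear in $H$ with a harmonious path between them, the two new boundary adjacencies (between $x$ and the first inserted column, and between the last inserted column and $y$) are harmonious as well.

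The step I expect to be the main obstacle is verifying that splicing preserves the \emph{global} consecutive-ones property, not just local harmony. Inserting columns in the middle of $N$ can in principle break the contiguity of the block of $1$'s in some row for a codeword that appears on both sides of the insertion point. The careful argument here is that for each row $r$, the inserted block must present its $1$'s for row $r$ consecutively and in a way compatible with the $1$'s already flanking the insertion point in $N$. I would handle this by choosing the insertion so that, for every row, the pattern of $1$'s across the boundary remains a single contiguous block; the comparability of consecutive columns in the spliced chain, together with the fact that $x$ and $y$ themselves mark the transition, is what makes this possible. One clean way to organize this is to induct on the number of inharmonious adjacencies in $N$, showing that a single splice strictly reduces that count while preserving CO-ness, so that finitely many splices yield an HCO multiordering. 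The remaining routine checks — that no splice creates a fresh inharmonious adjacency elsewhere and that the process terminates — I would dispatch with the locality of the harmonious condition and the monotone decrease of the inharmonious-pair count.
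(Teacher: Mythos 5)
There is a genuine gap, and it sits exactly where you predicted the main obstacle would be: splicing the whole block of columns lying between an occurrence of $x$ and an occurrence of $y$ in the harmonious template $H$ does not preserve the consecutive-ones property. The trouble is that the intermediate columns of $H$ may have $1$'s in rows where both $x$ and $y$ have $0$'s; inserting such a column between the adjacent pair $x,y$ of $N$ plants an isolated $1$ in a row whose block of $1$'s lives elsewhere in $N$, disconnecting it. Concretely, take $L=\{100,\,010,\,001,\,000\}$ (three rows). Then $H=(100,\,000,\,001,\,000,\,010)$ is an HCO multiordering and $N=(001,\,100,\,010,\,000)$ is a CO ordering in which $(100,010)$ is an inharmonious adjacent pair; splicing the block $(000,\,001,\,000)$ from $H$ between them produces third row $1,0,0,1,0,0$, which is not consecutive. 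Relatedly, your claim that the block between $x$ and $y$ in $H$ is a ``comparability chain that smoothly interpolates'' is unjustified: pairwise comparability of consecutive columns gives no monotonicity, and the chain can wander through codewords supported on rows where $x$ and $y$ both vanish. Your closing remark that you would ``choose the insertion'' to keep every row contiguous is precisely the step that needs an argument, and the splice as described admits no such choice beyond discarding everything but a single special column.

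The missing idea is that the \emph{only} column that can safely be inserted between an inharmonious adjacent pair $x,y$ of a CO ordering is their coordinatewise product (boolean meet) $z=x*y$: it is comparable to both, so it repairs harmony at both new adjacencies, and since $z$ has a $1$ only where both $x$ and $y$ do, it cannot disconnect any row. The entire substance of the proposition is then to prove that $x*y$ actually belongs to $L$, and this is where the hypothesis that some HCO multiordering exists must be used nontrivially. The paper does this in two steps: first, using Lemma~\ref{lem:row selection} and the fact that $x,y$ are adjacent in the CO ordering (so no element of $L$ is bound between them), it shows that some column of $H$ between $x$ and $y$ vanishes on $R_{10}\cup R_{01}$; then an $R_{00}$-connectivity argument shows that one such column also vanishes on $R_{00}$, i.e.\ equals $x*y$. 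Your proposal contains neither the identification of $x*y$ as the required padding nor any argument for its membership in $L$, so the proof does not go through as written.
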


The remainder of this section is a detailed proof of this
assertion. First, we fix some notation and definitions.
\begin{defn}
\label{defn:connected}
Let $R$ be the set of rows in a sensor matrix.
\begin{itemize}[leftmargin=5mm]
\item For a set of rows $S \subset R$, 
we say $x \in L$ \emph{is $0$ (resp. $1$) on $S$} if for all rows in 
$S$, $x$ has a $0$ (resp. $1$) in that row.

\item For any $S \subset R$, define an undirected graph 
$G_S$ with vertex set $L$ and edge set 
\[ E = \{ (x,y)\: \vline\: \: \exists r \in S 
\text{ such that } x \text{ and } y \text{ are both 1 on } r \}.\] In other words, $G_s$ detects if two sensors ``overlap'' on something in $S$.

\item Let $x,y \in L$ be \emph{$S$-connected} if they 
 are in the same connected component of 
$G_S$.

\item Let $x,y \in L$ and $r_1,r_2 \in R$ be such that the restriction
of $x, y$ to the rows $r_1,r_2$ is inharmonious. If $z \in L$ is
$1$ on $\{r_1,r_2\}$ then $z$ is \emph{bound between 
$x$ and $y$}. 
\end{itemize}
\end{defn}

In the following lemma, we summarize the
 main structural tools we use to explore our multiorderings.

\begin{lem}
\label{lem:row selection}
The following hold for any CO multiordering of $L$:
\begin{enumerate}[leftmargin=5mm]
%\item The connected components of $G_S$ must appear contiguously. {\com do we even use this}
\item If $x$ and $y \in L$ are $S$-connected and $z \in L$ is $0$ on $S$, then $z$ cannot be between $x$ and $y$.
\item If $z \in L$ is bound between $x \in L$ and $y \in L$, then in any $CO$ ordering of $L$, $z$ must be between $x$ and $y$. 
\end{enumerate}
\end{lem}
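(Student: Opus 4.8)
The plan is to reduce both parts to the single structural fact of a CO (multi)ordering: for every row $r$, the set $J_r$ of positions whose column carries a $1$ in row $r$ is a contiguous interval of positions. Throughout, I read ``$z$ is between $x$ and $y$'' as ``some copy of $z$ sits at a position strictly between a copy of $x$ and a copy of $y$'', and I translate each claim about this relation into a statement about the intervals $J_r$.

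For part (1), first I would record the key closure property of the connected component $C$ of $G_S$ that contains $x$ and $y$: if a column $w$ has a $1$ in a row $r \in S$ and some column $v \in C$ also has a $1$ in $r$, then $(v,w)$ is an edge of $G_S$, so $w \in C$. Consequently, for every row $r$ touched by $C$ the interval $J_r$ is occupied entirely by copies of columns of $C$, and in particular by no copy of $z$, since $z$ is $0$ on all of $S$. Next I would take a path $x = v_0, v_1, \dots, v_m = y$ in $G_S$ with $v_i, v_{i+1}$ both $1$ on a row $r_i \in S$; every copy of $v_i$ and of $v_{i+1}$ then lies in $J_{r_i}$, so consecutive intervals $J_{r_i}$ and $J_{r_{i+1}}$ share all copies of $v_{i+1}$ and hence overlap. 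I would conclude that $I := J_{r_0} \cup \cdots \cup J_{r_{m-1}}$ is a \emph{single} contiguous interval of positions containing every copy of $x$ and of $y$. Since $z$ is $0$ on each $r_i \in S$, no copy of $z$ lies in $I$; but any position strictly between a copy of $x$ and a copy of $y$ is forced into the contiguous interval $I$, so no copy of $z$ can be between them. The delicate point — and the main obstacle — is verifying carefully that $I$ really is one interval rather than several; this is exactly where the overlap-at-$v_{i+1}$ observation is used, and the multiordering setting (many copies of each vector) forces me to argue with the position sets of all copies, not a single representative.

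For part (2), I would unwind the definition of ``bound between'': there are rows $r_1, r_2$ on which the restriction of $x,y$ is one of the two inharmonious $2\times 2$ patterns, so after possibly swapping $r_1$ and $r_2$ I may assume $x$ has a $1$ in $r_1$ and $0$ in $r_2$ while $y$ has $0$ in $r_1$ and $1$ in $r_2$, and $z$ has a $1$ in both. In a CO ordering each of $x,y,z$ occupies a unique position, with $p_x, p_z \in J_{r_1}$ and $p_y \notin J_{r_1}$, and $p_y, p_z \in J_{r_2}$ and $p_x \notin J_{r_2}$. Assuming for contradiction that $z$ is not between $x$ and $y$, I would split into the cases $p_z < \min(p_x, p_y)$ and $p_z > \max(p_x, p_y)$. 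In each case contiguity of $J_{r_1}$ pins down the order of $p_x, p_y$ relative to $p_z$, and then contiguity of $J_{r_2}$ forces the excluded position $p_x$ to lie inside the interval spanned by $p_y$ and $p_z$, contradicting $p_x \notin J_{r_2}$. This is a short, purely mechanical case check once the intervals are in place, so I expect the real work to reside entirely in the connectivity argument of part (1).
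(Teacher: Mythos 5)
Your proposal is correct and follows essentially the same route as the paper's (much terser) proof: both parts reduce to the contiguity of the $1$-positions of each row, with part (1) argued via the chain of overlapping row-intervals along a path in $G_S$ and part (2) by the two-case position check against $r_1$ and $r_2$. Your write-up simply makes explicit the overlapping-interval induction and the multiplicity bookkeeping that the paper leaves implicit under the phrase ``violate convexity.''
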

\begin{proof}
To see (1), note that $z$ would otherwise violate convexity of the intervals labeled by $S$. For (2), suppose $x$ were to the left of $y$. If $z$ were to the right of $y$, $y$ would violate the convexity of the interval containing both $x$ and $z$. By symmetry, $z$ cannot be to the left of $x$. Thus, $z$ must be between them.
\end{proof}

\begin{proof}[Proof of Proposition~\ref{prop:extension}]
Suppose $L$ has the HCO multiordering $A$. 
Consider any CO ordering $B$ of $L$. 
We claim we can insert columns between adjacent 
inharmonious columns of $B$ to create an HCO multiordering. The idea is to ``copy and paste'' them from $A$.

Take any inharmonious pair of columns $x$ and $y$. In $A$, pick any pair of 
columns equal to $x$ and $y$ and call them $x$ and $y$ 
as a slight abuse of notation. Without loss of generality, suppose $x$ is left of $y$. 
Consider the submatrix $M$ formed from these two columns 
and all columns between them. 
$x$ and $y$ partition the rows into four sets $R_{ij}$, 
where $i, j \in \{0,1\}$ and $R_{ij}$ contains rows $r$ 
where $M_{r, x} = i$ and $M_{r, y} = j$. 
Note that $R_{10}$ and $R_{01}$ are nontrivial 
since $x$ and $y$ are inharmonious. 
Also, every vector in $M$ is $1$ on $R_{11}$ (which could be trivial).

%We claim that there is a column $c_w$, $u < w < v$, of $A'$ such that $M_{r, w} = 1$ if $r \in R_{11}$ and $0$ otherwise. To see this, first note that by CO, $M_{r,w} = 1$ for all $r \in C_{11}$ trivially. Thus it suffices to consider only the other $3$ types of rows. Among those, find a column with the most number of $0$'s.

{\bf Claim:} {\em There is at least one vector $v$ in $A$ between $x$ and $y$
which is $0$ on $R_{10} \cup R_{01}$.} \\
By Lemma \ref{lem:row selection}, since $x$ and $y$ 
are adjacent in $B$, which is CO, there must not be 
any vectors in $L$ bound between $x$ and $y$.

To see this, consider the rightmost vector $x'$ 
which is $0$ on $R_{01}$ and the leftmost vector $y'$ 
which is $0$ on $R_{10}$. If $x' = y'$, this is the desired $v$.
If $x'$ is to the right of $y'$, both vectors must be $0$ on
$R_{10} \cup R_{01}$, by convexity.

Suppose $x'$ is strictly to the left of $y'$. By assumption,
$x'$ is not completely $0$ on $R_{10}$ while $y'$ is not completely 
$0$ on $R_{01}$. Note $x'$ cannot be adjacent to $y'$, 
since this would mean $x'$ and $y'$ are not harmonious. 
Thus, there is some vector $z$ strictly between 
them that has value $1$ on some row $r_1 \in R_{10}$ 
and some $r_2 \in R_{01}$. But this means $z$ is bound between 
$x$ and $y$, which gives a contradiction. %\checkmark

{\bf Claim:} {\em There is at least one vector $v$ in $A$ between $x$ and $y$
which is $0$ on $R_{10} \cup R_{01} \cup R_{00}$.} \\
Based on the last claim, we have a nontrivial submatrix
$M'$ comprised of the contiguous columns 
that are $0$ on $R_{10} \cup R_{01}$. 
Call its leftmost and rightmost vectors $v_1$ and 
$v_2$ respectively. Suppose, by way of contradiction,
that every column in $M'$, 
including $v_1$ and $v_2$, must have a $1$ somewhere 
on $R_{00}$. Consider the vector $x'$ in $M$ 
directly left of $v_1$; note that:

\begin{itemize}
\item $x'$ must have a $1$ in $R_{10}$ (otherwise it would have been in $M'$).
\item $x'$ must be $0$ on $R_{01}$ (since it is to the left of $M'$).
\item $x'$ must have at least a $1$ in $R_{00}$ (in fact, all the $1$'s that $v_1$ has, since $x' > v_1$ by the fact that $x'$ has a $1$ somewhere in $R_{10}$ and $M$ is harmonious). In particular, $x' \neq x$. 
\end{itemize}

The same argument gives a vector $y' \neq y$ right of $v_2$ in $M$. An example of such a situation is given in Figure~\ref{fig:transition example}.

\begin{figure}[h!]
\setlength\extrarowheight{2pt}
\begin{longtable}[c]{c|ccc|ccc|ccc|}
\multicolumn{1}{c}{} & \multicolumn{9}{c}{$\overbrace{\hspace{6cm}}^{\text{\normalsize $M$}}$} \\
\multicolumn{4}{c}{} & \multicolumn{3}{|c|}{$\overbrace{\hspace{1.6cm}}^{\text{\normalsize $M'$}}$} & \multicolumn{3}{c}{} \\
 \multicolumn{1}{c}{} & $x$ & $\cdots$ & $x'$ & $v_1$ & 
& $v_2$ & $y'$ & $\cdots$ & \multicolumn{1}{c}{$y$} \\[2mm]
\cline{2-10}
$R_{11}$ & 1 & $\cdots$ & 1 & 1 & 1 & 1 & 1 & $\cdots$ & 1 \\
\cline{2-10}
\multirow{2}{*}{$R_{10}$}& 1 & $\cdots$ & 1 & 0 & 0 & 0 & 0 & $\cdots$ & 0 \\
& 1 & $\cdots$ & 0 & 0 & 0 & 0 & 0 & $\cdots$ & 0 \\
\cline{2-10}
\multirow{2}{*}{$R_{01}$} & 0 & $\cdots$ & 0 & 0 & 0 & 0 & 1 & $\cdots$ & 1 \\
& 0 & $\cdots$ & 0 & 0 & 0 & 0 & 0 & $\cdots$ & 1 \\
\cline{2-10}
\multirow{3}{*}{$R_{00}$} & 0 & $\cdots$ & 1 & 0 & 0 & 0 & 0 & $\cdots$ & 0 \\
& 0 & $\cdots$ & 1 & 1 & 1 & 0 & 0 & $\cdots$ & 0 \\
& 0 & $\cdots$ & 0 & 0 & 1 & 1 & 1  & $\cdots$ & 0 \\
\cline{2-10}
\end{longtable}
\begin{comment}
\begin{center}
$\begin{array}{c|c|c|ccc|c|c|c}
 &  &  &  & M &  &  &  &  \\
x & \cdots & x' & v_1 & & v_2 & y' & \cdots & y \\
\hline
1 & \cdots & 1 & 1 & 1 & 1 & 1 & \cdots & 1 \\
\hline
1 & \cdots & 1 & 0 & 0 & 0 & 0 & \cdots & 0 \\
1 & \cdots & 0 & 0 & 0 & 0 & 0 & \cdots & 0 \\
\hline
0 & \cdots & 0 & 0 & 0 & 0 & 1 & \cdots & 1 \\
0 & \cdots & 0 & 0 & 0 & 0 & 1 & \cdots & 1 \\
0 & \cdots & 0 & 0 & 0 & 0 & 0 & \cdots & 1 \\
\hline
0 & \cdots & 1 & 0 & 0 & 0 & 0 & \cdots & 0 \\
0 & \cdots & 1 & 1 & 1 & 0 & 0 & \cdots & 0 \\
0 & \cdots & 0 & 0 & 1 & 1 & 1 & \cdots & 0 \\
\hline
\end{array}$
\end{center}
\end{comment}
\caption{An example $M$, $M'$, and $R_{ij}$ illustrating the proof.}
\label{fig:transition example}
\end{figure}

The key observation is that $x'$ is $R_{00}$-connected to $y'$. Because $x' > v_1$ and $y' > v_2$, it suffices to prove that $v_1$ is $R_{00}$-connected to $v_2$. To see this, note that on $M'$ the sets of rows $R_{11}, R_{10}, R_{01}$ are the only places that values may change between $v_1$ and $v_2$. So each step from $v_1$ to $v_2$ consists of adding $1$'s in $R_{00}$ or removing $1$'s but not both. Furthermore, we can never remove all the $1$'s by our condition that all columns in $M'$ must have a $1$ somewhere on $R_{00}$. Therefore, $x'$ is $R_{00}$-connected to $y'$.

Given this information, we claim that it is impossible for $x$ to be adjacent to $y$ in $B$. If this were the case, $y'$ must appear to the right of $y$, because of Lemma~\ref{lem:row selection} and the fact that $y$ and $y'$ have a $1$ shared somewhere in $R_{01}$ whereas $x'$ doesn't. Similarly, $x'$ must appear to the left of $x$. But $x'$ and $y'$ are $R_{00}$-connected and both $x$ and $y$ are $0$ on $R_{00}$, so Lemma~\ref{lem:row selection} gives a contradiction. %\checkmark

Thus, we must have had a vector $z$ that was $0$ 
everywhere except on $R_{11}$. This means we can 
just insert $z$ between $x$ and $y$ to make this 
part of our multiordering harmonious; furthermore, 
this operation does not affect harmoniousness or 
CO-ness anywhere else. Applying this to all 
adjacent pairs in $B$ gives an HCO multiordering $B'$, as desired.
\end{proof}
We now are able to prove our main result of this section:
\begin{thm}
\label{thm:1d reconstruction CO-harmonious}
Let $L$ be a set of $n$ length-$k$ codewords, in the
 sensor-dense regime for $\RR^1$. There is a 
constructive $O(n+k)$ algorithm that produces an HCO
multiordering of $L$; equivalently, it outputs an interval
arrangement realizing $L$ as a code.
\end{thm}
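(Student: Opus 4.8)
The plan is to chain together the two tools assembled above. By Proposition~\ref{prop:matrices-realizability}(2), producing an interval arrangement realizing $L$ in the sensor-dense regime is the same as producing an HCO matrix whose column set is $L$, so it suffices to output an HCO multiordering of $L$; the arrangement is then recovered by the reverse construction in the proof of Proposition~\ref{prop:matrices-realizability}. Thus the whole theorem reduces to making the existential statement of Proposition~\ref{prop:extension} into an explicit, linear-time procedure.

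First I would run the PQ-tree algorithm of Proposition~\ref{prop:soft set reconstruction} on $L$. In $O(n+k)$ this either certifies that $L$ has no CO ordering — in which case it has no HCO multiordering either, since deleting duplicate columns from any HCO multiordering leaves a CO ordering of $L$ — or it returns an explicit CO ordering $B$ of the $n$ vectors. Starting from $B$, I would run a single left-to-right sweep that repairs each inharmonious adjacency. The crucial point, which is exactly what the proof of Proposition~\ref{prop:extension} extracts, is that the padding vector is never a matter of search: between an adjacent inharmonious pair $(x,y)$ one inserts precisely the meet $z = x \wedge y$, i.e. the vector that is $1$ on $R_{11} = \{r : x_r = y_r = 1\}$ and $0$ elsewhere, and this $z$ is forced to lie in $L$ whenever $L$ admits any HCO multiordering. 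So for each of the $n-1$ adjacencies of $B$ I would test comparability in the boolean lattice, and if $(x,y)$ is incomparable I would insert $z = x \wedge y$ between them (and, for a full decision procedure, reject if $z \notin L$, which by Proposition~\ref{prop:extension} signals that no HCO multiordering of $L$ exists).

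Correctness I would then check directly. Since $z \le x$ and $z \le y$, the two new adjacencies $(x,z)$ and $(z,y)$ are harmonious; since $z$ is $1$ exactly on $R_{11}$, inserting it between $x$ and $y$ keeps every row a single contiguous block of $1$'s — on $R_{11}$ a $1$ is placed between two $1$'s, and on every other row a $0$ is placed, which cannot split an existing block — so the CO property survives; and because each repair is purely local it disturbs no other adjacency, so all insertions may be done in one pass. After the sweep every adjacency is harmonious and the matrix is still CO, hence an HCO multiordering, and its column set is still $L$ because every inserted column already lay in $L$.

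The main obstacle is the running time; the conceptual content is already absorbed by Proposition~\ref{prop:extension}. What remains delicate is keeping the whole procedure within $O(n+k)$, since naively each meet computation and each membership test $z \in L$ costs $\Theta(k)$ and there are up to $n-1$ of them. The hard part is to show this bookkeeping does not exceed the PQ-tree step: I would precompute a lookup structure for $L$ (a trie or a hash on the codewords) during the PQ-tree pass, observe that at most $n-1$ insertions occur so the output has at most $2n-1$ columns, and argue that the extra work is linear in the input-plus-output size, matching the bound of Proposition~\ref{prop:soft set reconstruction} that dominates the computation.
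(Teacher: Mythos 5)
Your proposal is correct and follows essentially the same route as the paper's proof: obtain a CO ordering via the PQ-tree algorithm, invoke Proposition~\ref{prop:extension} to guarantee extendability, and repair each inharmonious adjacency by inserting the coordinate-wise product $x*y$, rejecting if it is absent from $L$. Your added details (the explicit check that inserting $z=x\wedge y$ preserves CO and harmoniousness locally, and the more honest accounting of the $\Theta(k)$ cost of meets and membership tests via a precomputed lookup structure) only make the argument more careful than the paper's, which glosses the latter point by calling the check ``constant time.''
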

\begin{proof}
First, if there is an HCO multiordering of $L$, 
removing duplicate vectors gives a 
(not necessarily harmonious) CO ordering. 
Thus, we can use Proposition~\ref{prop:soft set reconstruction} 
to find such a CO ordering $A$ and decide when 
it does not exist (in which case there must be no 
HCO multiordering of $L$ either). 
Now, Proposition~\ref{prop:extension} tells us that $A$ 
must be extendable into a harmonious CO multiordering, 
so it suffices to find such an extension.

Consider any pair of adjacent inharmonious vectors 
$x$ and $y$ in $A$. The coordinate-wise product $x*y$
is the vector described in the proof of 
Proposition~\ref{prop:extension}. If $x * y$ is in $L$,
insert a copy between $x$ and $y$. If not, then the reasoning
above implies that no HCO multiordering exists.
As the check for such an $x*y$ is constant time, 
our algorithm is still $O(n+k)$ time.
\end{proof}

We do not have a solution for the circular case. The main obstacle is that Proposition~\ref{prop:extension}, our main tool in this section, is not true for the circular case. 
\begin{ex}
Consider the HCCO multiordering (in fact, an ordering) at left.
\[ \begin{small} \left\{\begin{bmatrix} 1 \\ 0 \\ 0 \\ 0 \end{bmatrix},
\begin{bmatrix} 1 \\ 1 \\ 1 \\ 0 \end{bmatrix},
\begin{bmatrix} 0 \\ 1 \\ 0 \\ 0 \end{bmatrix},
\begin{bmatrix} 1 \\ 1 \\ 0 \\ 1 \end{bmatrix} \right\} \hspace{1cm} \left\{\begin{bmatrix} 1 \\ 0 \\ 0 \\ 0 \end{bmatrix},
\begin{bmatrix} 0 \\ 1 \\ 0 \\ 0 \end{bmatrix},
\begin{bmatrix} 1 \\ 1 \\ 1 \\ 0 \end{bmatrix},
\begin{bmatrix} 1 \\ 1 \\ 0 \\ 1 \end{bmatrix}\right\}\end{small}\]
It has a CCO ordering, at right, where the first two vectors are inharmonious. However, it is impossible to insert other vectors between the first two to create an HCCO multiordering; in fact, the insertion of any other vector breaks the CCO property. Thus, the reconstruction must use a different strategy.
\end{ex}

\subsection{An Example}
\label{sec:pctree}

The reader may be unfamiliar with the algorithm of PC-tree construction,
so we illustrate a non-trivial example here to supplement our other references:

Suppose we are given the following code: $\mathcal{C} = \{1100,1000,0100,
0000,0001,0110\}$ or, more succinctly, $\{12,1,2,0,4,23\}$. We carry out
the PC algorithm in Figure~\ref{fig:pctree}, which should be read
from left to right. The output is the ordering
in the bottom right; note that the small black (known as ``P'') nodes can be permuted
arbitrarily, whereas the white (``C'') nodes can only be
shifted with circular permutations.

\begin{figure}[!h]
\hspace{-1cm}
\begin{longtable}{cccc}
\includegraphics[scale=.5]{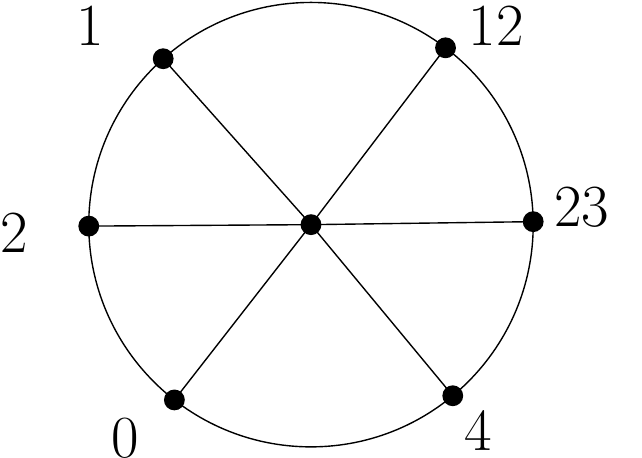} &
\includegraphics[scale=.45]{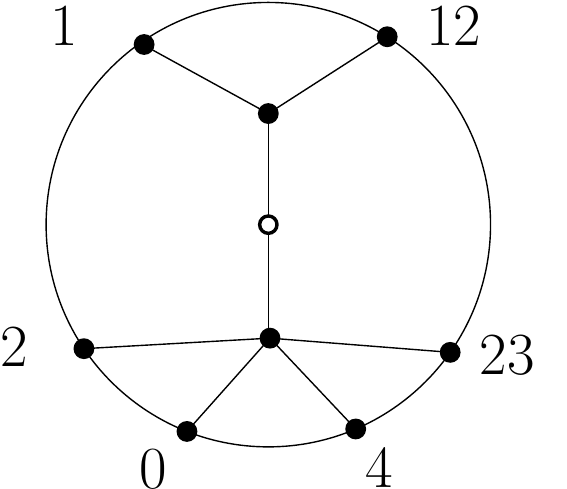} &
\includegraphics[scale=.45]{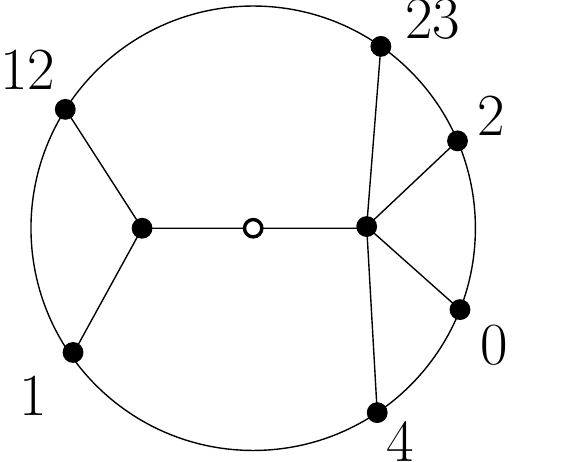} &
\includegraphics[scale=.45]{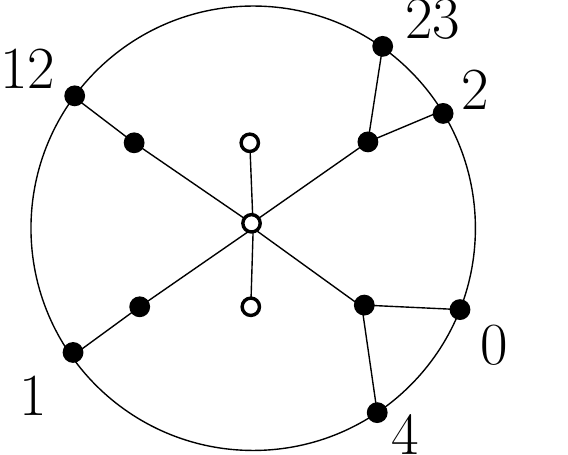} \\
\includegraphics[scale=.45]{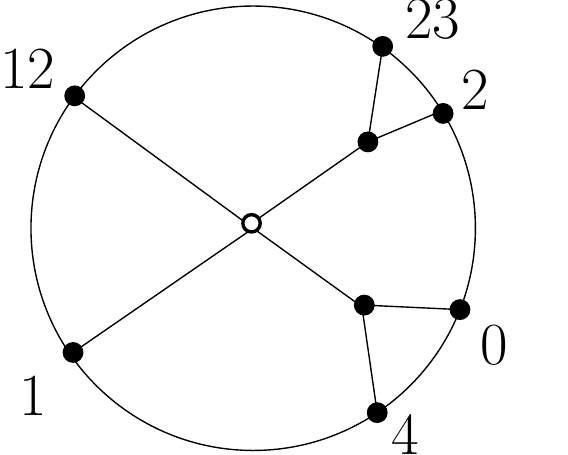} &
\includegraphics[scale=.45]{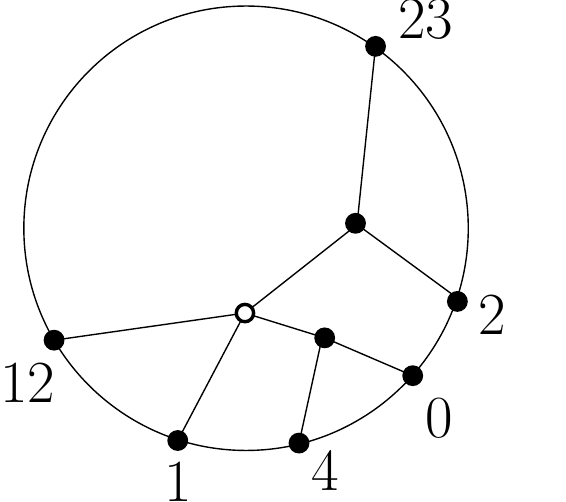} &
\includegraphics[scale=.45]{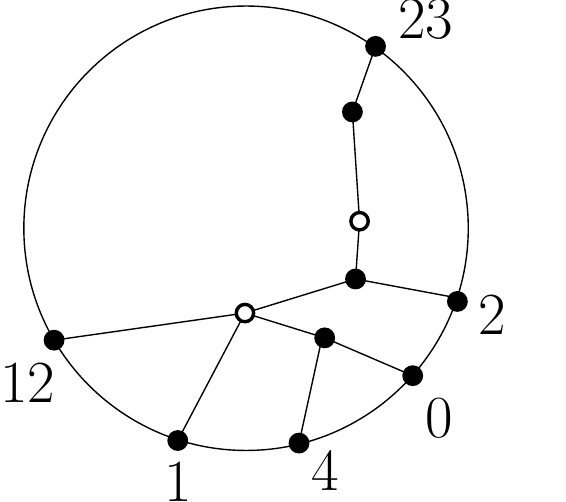} &
\includegraphics[scale=.45]{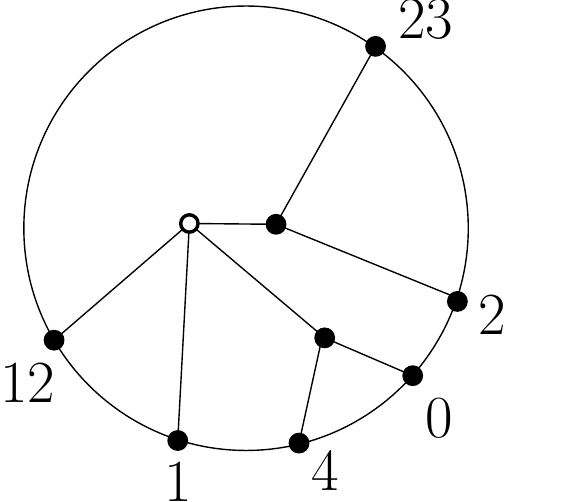} \\
\includegraphics[scale=.45]{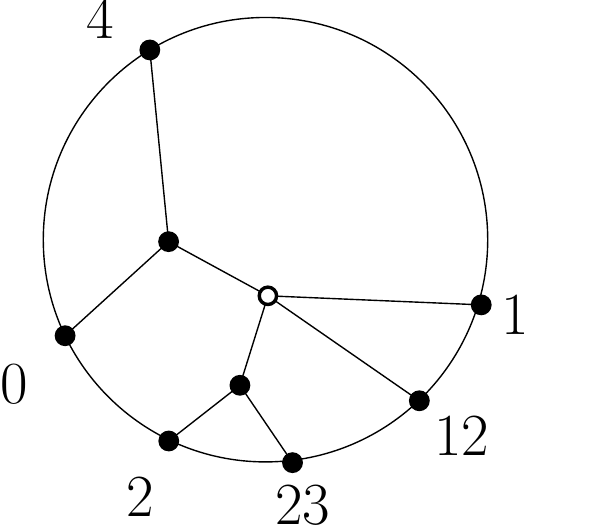} &
\includegraphics[scale=.45]{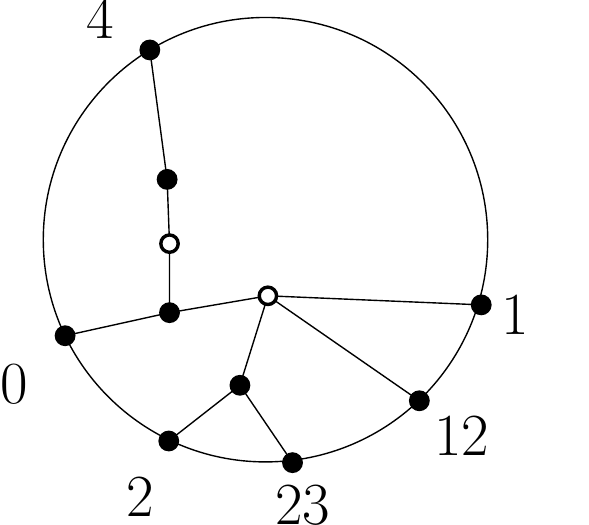} &
\includegraphics[scale=.45]{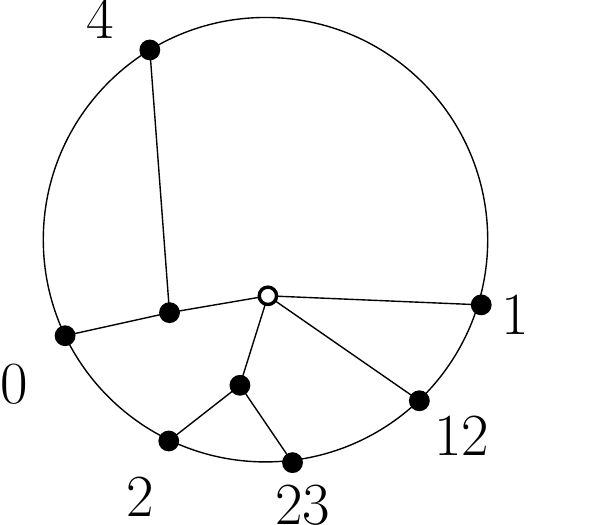} &
\includegraphics[scale=.45]{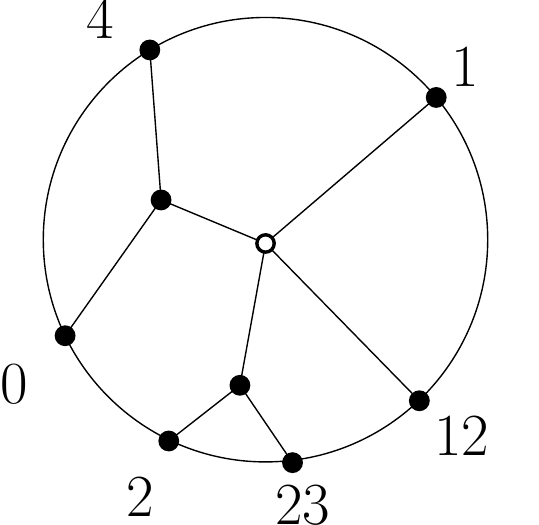} 
\end{longtable}

\label{fig:pctree}
\caption{Intermediate Stages of the PC Algorithm for $\mathcal{C}$}
\end{figure}

\pagebreak

Since the zero word appears in the code, any 
matrix coming from this PC tree is CO and CCO.
This handles the sensor-sparse case for both cases. 
To determine the sensor-dense case,
we fix a matrix and check for inharmonious pairs. Starting
from $4$ and moving clockwise, we obtain the matrix below left.
The only inharmonious pair is the first two columns. Since
their coordinate-wise product, the all-$0$'s word, is in the code, we can add it inbetween the two columns to obtain the HCO and HCCO matrix below right.

\[ \begin{bmatrix}
0 & 1 & 1 & 0 & 0 & 0 \\
0 & 0 & 1 & 1 & 1 & 0 \\
0 & 0 & 0 & 1 & 0 & 0 \\
1 & 0 & 0 & 0 & 0 & 0 \\
\end{bmatrix}\hspace{1cm}\longrightarrow \hspace{1cm} \begin{bmatrix}
0 & 0 & 1 & 1 & 0 & 0 & 0 \\
0 & 0 & 0 & 1 & 1 & 1 & 0 \\
0 & 0 & 0 & 0 & 1 & 0 & 0 \\
1 & 0 & 0 & 0 & 0 & 0 & 0 \\
\end{bmatrix}.\]

%\pagebreak

\section{The Multiset Reconstruction Problem}
\label{sec:multiset-reconstruction}

Recall that the multiset reconstruction problem is the following: 
given a multiset of $n$ codewords $L$, 
decide if there exists a CO (resp. HCO, CCO, HCCO)
 matrix (with exactly $n$ columns) 
such that the multiset of columns is $L$. 
As before, the sensor-sparse regime 
(for both the linear and circular versions) 
is much easier, but similar ideas can be used to prove the
linear sensor-dense case.

\begin{lem} The following holds for a CO  matrix $M$:
\begin{enumerate}[1.]
\item Removal of any column preserves CO.
\item Insertion of a duplicate of some column $v$ adjacent to $v$ preserves CO.
\item Insertion of a duplicate of some column $v$ adjacent to $v$ preserves HCO.
\end{enumerate}
\label{lem:multiset1}
\end{lem}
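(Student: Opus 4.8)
The plan is to handle the three claims in the order stated, since each is a purely local statement: the consecutive-ones property is a condition checked one row at a time, while harmoniousness is a condition checked one adjacent column-pair at a time. Consequently, for each operation I only need to track the rows (for the CO parts) or the adjacent pairs (for the HCO part) that are actually disturbed, and verify nothing bad happens there.

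For claim (1), I would argue row by row. Fix a row $r$; by CO its entries form a single contiguous block of $1$'s (possibly empty). Deleting a column deletes exactly one entry of this binary string, and deleting an entry from a string with a single block of $1$'s cannot create a second block: if the deleted entry is a $0$ the block is untouched, and if it is a $1$ the block merely shrinks. Hence every row remains a discrete interval, so CO is preserved. This is precisely the remark used informally earlier that removing vectors cannot break the CO condition.

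For claim (2), I would again work row by row, observing that inserting a duplicate of $v$ adjacent to $v$ duplicates the entry of $v$ in each row. Duplicating an entry of a binary string with a single block of $1$'s again cannot break contiguity: if $v$ has a $1$ in row $r$, the two equal entries sit side by side and simply extend (or lie inside) the block; if $v$ has a $0$ in row $r$, we insert a $0$ immediately next to an existing $0$, which cannot split the block of $1$'s located elsewhere in the row. Thus CO is preserved. For claim (3), now assuming $M$ is HCO, I would first invoke claim (2) to conclude CO is preserved, so that it remains only to check the harmoniousness of the newly created adjacencies. Writing the local picture around $v$ as $\dots, a, v, b, \dots$ and inserting the duplicate $v'$ immediately after $v$ to obtain $\dots, a, v, v', b, \dots$, the only adjacent pairs that change are $(v,v')$ and $(v',b)$. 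The pair $(v,v')$ is harmonious because $v$ and $v'$ are equal, and equal vectors are trivially comparable in the Boolean lattice, which is the recorded criterion for harmoniousness. The pair $(v',b)$ coincides with the pair $(v,b)$, which was already adjacent in the original HCO matrix and hence harmonious by hypothesis. The symmetric case of inserting $v'$ to the left of $v$ is identical. Therefore no inharmonious adjacent pair is introduced, and HCO is preserved.

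I expect no serious obstacle here; the one point needing care is the bookkeeping in claim (3) of exactly which adjacencies are created, together with the observation that equality of two columns immediately yields comparability and hence harmoniousness. Everything else reduces to the strictly local, per-row and per-pair, nature of the two properties.
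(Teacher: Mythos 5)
Your proof is correct and follows essentially the same route as the paper's: rows are handled locally for the CO claims, and for the HCO claim the key point in both arguments is that since $v'=v$, any adjacency involving $v'$ corresponds to an adjacency of the original matrix (the paper phrases this as a contradiction, you phrase it directly, but the content is identical). No issues.
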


\begin{proof}
Given a row with a single interval of contiguous $1$'s, removing a $1$ in the row creates another (possibly empty) interval of $1$'s, and removing a $0$ outside of the interval does not change the interval. Thus, removal of any column preserves CO.

Now, suppose we duplicate a column next to itself. For each row, we originally had a single interval of contiguous $1$'s. We have either added a $1$ next to some $1$ in this interval or added a $0$ outside the interval where there was already a $0$. Thus, this operation preserves $CO$.

Finally, for sake of contradiction suppose the inserted column $v'$ creates an inharmonious pair while the original matrix is HCO. Without loss of generality $(v',w)$ is inharmonious, and $v$ is to the left of $v'$. Then removing $v'$ creates the inharmonious pair $(v, w)$ in the original matrix, a contradiction.
\end{proof}

\begin{lem}
Suppose a column $v$, appearing with multiplicity $m$ in an HCO matrix
$A$, has the property
that the removal of any instance introduces an inharmonious pair.
Then any HCO matrix with the same underlying set has $v$ appearing with
multiplicity at least $m$.
\label{lem:multiset2}
\end{lem}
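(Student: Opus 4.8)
The plan is to pass to the interval-arrangement picture licensed by Proposition~\ref{prop:matrices-realizability} and to isolate a numerical invariant of the underlying set that bounds the multiplicity of $v$ from below. Write $T \subseteq [k]$ for the support of $v$, and regard $A$ (and any competing HCO matrix $B$) as an arrangement of open intervals $I_1,\dots,I_k$ on the line, one per row. A copy of $v$ is exactly a connected component of the locus $R_T$ on which precisely the intervals indexed by $T$ are active, so the multiplicity of $v$ in any arrangement equals the number of components of $R_T$ there (adjacent duplicate $v$-columns only inflate the count, which is harmless for a lower bound). Set $J := \bigcap_{t\in T} I_t$; then $R_T = J \setminus \bigcup_{i\notin T} I_i$.

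First I would dispose of the trivial range: if $m=1$ there is nothing to prove, since $v\in L$ forces $v$ to appear in $B$, so assume $m\ge 2$. Deleting an instance of $v$ merges its two neighbours $a,b$ into an adjacent pair, and since $A$ is HCO each neighbour is comparable to $T$; the merged pair is inharmonious precisely when $a$ and $b$ are incomparable to each other, which, given comparability to $T$, forces either $a,b \supsetneq T$ or $a,b \subsetneq T$. A component of the second (``$\subsetneq T$'') type must abut $\partial J$ on \emph{both} sides, hence be all of $J$, forcing $m=1$; so when $m\ge 2$ every component of $R_T$ lies strictly inside $J$ and is a genuine gap flanked by two intervals $I_i,I_j$ with $i,j\notin T$. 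Note these flanking intervals are disjoint (they bound a gap), so no codeword contains $T\cup\{i,j\}$ — the combinatorial avatar being the row-contiguity argument of Lemma~\ref{lem:row selection}.

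The crux is the resulting invariant. Let $N^{+} = \{\,i\notin T : c\supseteq T\cup\{i\}\text{ for some }c\in L\,\}$ be the intervals meeting $J$, and build the graph $\Phi$ on $N^{+}$ with $i\sim j$ whenever some codeword contains $T\cup\{i,j\}$. In \emph{any} arrangement realizing $L$, $\Phi$ is the intersection graph of the intervals $\{I_i\cap J\}_{i\in N^{+}}$, because $I_i\cap I_j\cap J\neq\emptyset$ holds iff a region has codeword $\supseteq T\cup\{i,j\}$ iff $i\sim j$ in $\Phi$; thus $\Phi$, and in particular its number $p$ of connected components, depends only on $L$. By the standard interval-graph identity, $\bigcup_{i\in N^{+}}(I_i\cap J)$ has exactly $p$ components, so $R_T$ consists of the $p-1$ internal gaps between consecutive components of this union, together with at most two end gaps. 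Essentiality kills the end gaps in $A$ (an end gap would touch $\partial J$ and present a neighbour $\subsetneq T$, hence a comparable, harmless pair), so $m = p-1$; and in any HCO matrix $B$ the $p-1$ internal gaps are still forced to appear, whence $R_T$ has at least $p-1=m$ components and $v$ has multiplicity at least $m$.

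The main obstacle is the middle step: verifying that $p$ is genuinely arrangement-independent and that essentiality pins $m$ to the internal-gap count. The first rests on the interval-graph identity ``components of the union $=$ components of the intersection graph,'' applied to the intervals restricted to $J$, together with the observation that the intersection graph itself is read off from $L$ alone. The second needs the case analysis above, separating neighbours above $T$ from those below it and showing that a ``below'' component forces $m=1$. Once these are in place the bound is immediate, and Lemma~\ref{lem:multiset1} supplies the matching upward constructions needed in the surrounding multiset theorem.
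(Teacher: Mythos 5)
Your argument is correct and follows essentially the same route as the paper's: both restrict attention to $\tilde X = \bigcap_{j\colon v_j = 1} I_j$, observe that the essential copies of $v$ are exactly the gaps between consecutive connected components of the remaining intervals inside that region, and note that the number of such components is determined by the underlying set alone, hence agrees across all HCO matrices realizing $L$. The only cosmetic difference is that you count components via the intersection graph on the rows $i \notin T$ meeting $J$, whereas the paper uses the co-occurrence graph $G_S$ on the columns $\geq v$ (Definition~\ref{defn:connected}); these have the same component count, and your end-gap/$m=1$ analysis plays the role of the paper's ``bound between'' dichotomy.
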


\begin{proof} Take an instance of $v$ with this property, 
and let $(x,y)$ be an inharmonious pair. The column $v$ is either bound
between $x$ and $y$ or it is the coordinate-wise product $x*y$. 
If it is bound between $x$ and $y$, then by Lemma~\ref{lem:row selection},
so must any other instance of $v$; since removal would result in an
inharmonious pair, this means that $m = 1$, and the lemma is proven.

Therefore, the list of locations of $v$ is between pairs $(x_i, y_i)$ for
$i = 1,\ldots, m$ such that $v = x_i*y_i$ for all $i$. 
By $CO$, the coordinates of $v$ are $1$ precisely 
on the rows where every $x_i$ and $y_i$ is $1$.
Let $\tilde{A}$ be the submatrix of $A$  whose columns are greater 
than or equal to $v$, and whose rows are precisely those where $v$ is $0$.
Intuitively, this means resetting the ambient space as $\tilde{X} = 
\bigcap_{j: v_j = 1} I_j$, and then considering the interval arrangement
$\{I_k\}_{k: v_k = 0}$.

Using Definition \ref{defn:connected}, we claim
that nonzero blocks of $\tilde{A}$  correspond to $R_{v = 0}$-connected
components of the graph. Since this graph is constructed from
the underlying set (not the multiset), it is invariant under reordering.
Because these components are nonzero, we will need exactly one
zero column between adjacent blocks to preserve harmoniousness. 
Thus the minimal multiplicity is fixed under any ordering.
\end{proof}

\begin{prop}
\label{prop:soft multiset reconstruction}
Suppose we are in (either the linear or the circular case of) 
the sensor-sparse regime. Given a multiset of $n$ length-$k$ 
vectors $L$, there is a $O(n+k)$ algorithm that solves 
the multiset reconstruction problem. Specifically, the 
algorithm decides if there exists a CO (or CCO) matrix with
 $n$ columns in bijection with $L$ and constructs one if it exists. 
\end{prop}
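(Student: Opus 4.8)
The plan is to reduce the multiset problem to the set problem already solved in Proposition~\ref{prop:soft set reconstruction}, using the stability facts of Lemma~\ref{lem:multiset1}. First I would strip the multiplicities: from the input multiset $L$, form the underlying \emph{set} $L'$ of distinct vectors and record, for each $v \in L'$, its multiplicity $m(v) \geq 1$, so that $\sum_{v \in L'} m(v) = n$. The core claim driving the reduction is that $L$ (as a multiset) admits a CO (resp. CCO) matrix whose columns are in bijection with $L$ if and only if $L'$ (as a set) admits a CO (resp. CCO) matrix with column set $L'$.

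For this equivalence, the ``only if'' direction is immediate from Lemma~\ref{lem:multiset1}(1): given any CO (resp. CCO) matrix realizing the multiset $L$, deleting all but one copy of each repeated column yields a CO (resp. CCO) ordering of $L'$, since removal of columns never destroys the (circular) consecutive-ones property. For the ``if'' direction, I would apply Proposition~\ref{prop:soft set reconstruction} to $L'$ to produce a CO (resp. CCO) ordering $A$ in which each distinct vector appears exactly once, and then reinflate: for each column $v$ of $A$, insert $m(v) - 1$ additional copies immediately adjacent to $v$. By Lemma~\ref{lem:multiset1}(2) each such adjacent duplication preserves CO, and the same local argument — a duplicated column only lengthens an existing block of $1$'s or sits inside an existing block of $0$'s in every row — shows it preserves CCO as well. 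Thus the final matrix is a valid realization with exactly $n = \sum_v m(v)$ columns in bijection with $L$.

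Combining the two directions turns the decision into ``run the set algorithm on $L'$,'' which both decides realizability and, when it succeeds, hands back the ordering $A$ to be inflated. For the running time, forming $L'$ together with the multiplicity function $m$ is a single pass over the input and fits within the $O(n + k)$ convention used for the set version; Proposition~\ref{prop:soft set reconstruction} runs in $O(n' + k) \le O(n+k)$ on the $n' \le n$ distinct vectors; and the reinflation performs $n - n'$ adjacent insertions, producing the final $n$ columns within the same budget. Hence the whole procedure is $O(n+k)$.

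I expect the only genuinely delicate point to be confirming that adjacent duplication is harmless in the \emph{circular} (CCO) setting, since Lemma~\ref{lem:multiset1} is phrased for CO matrices; the fix is to observe that its proof is purely local to each row and respects the wrap-around block structure, so it transfers verbatim to CCO. The remaining content — that deduplication and reinflation are exact inverses at the level of multisets, and that the time bounds match those already granted for the set problem — is routine bookkeeping.
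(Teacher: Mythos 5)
Your proposal is correct and follows essentially the same route as the paper's proof: reduce to the underlying set $L'$, invoke Lemma~\ref{lem:multiset1} to justify both deduplication and adjacent reinflation, and run the set algorithm of Proposition~\ref{prop:soft set reconstruction}. Your explicit check that adjacent duplication also preserves the CCO property is a point the paper handles only with the remark that ``the logic flows identically,'' so your added care there is welcome but not a departure.
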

\begin{proof}
The main observation is that if $L'$ is the underlying set 
(i.e. removing all duplicates) of $L$, then there is a CO 
matrix with the column multiset equal to $L'$ if and only 
if there is such a matrix with the column multiset equal to $L$.
Using the results of Lemma \ref{lem:multiset1}, it is enough
to consider the question for $L'$. If a CO matrix exists for $L'$
then padding the matrix appropriately gives a realization of $L$.

Thus, the multiset reconstruction problem for $L$ is equivalent 
to the set reconstruction problem for $L'$ and can be resolved 
with Proposition~\ref{prop:soft set reconstruction}. 
The logic used in this proof flows identically for the 
CCO case as the CO case.
\end{proof}

Similar logic allows us to address the sensor-dense regime.

\begin{prop}
\label{prop:hard multiset reconstruction}
Suppose we are in the linear case of the sensor-dense regime. Given a multiset of $n$ length-$k$ vectors $L$, there is an $O(n+k)$ algorithm that solves the multiset reconstruction problem. Specifically, the algorithm decides if there exists an HCO matrix with $n$ columns 
in bijection with $L$ and constructs one if it exists. 
\end{prop}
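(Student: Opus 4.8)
The plan is to mirror the structure of the sensor-sparse argument in Proposition~\ref{prop:soft multiset reconstruction}, but with one essential complication: in the HCO setting the minimal multiplicity of a vector need not be $1$, so I must first pin down these minimal multiplicities before padding up to the target multiset. Let $L'$ denote the underlying set of $L$ (that is, $L$ with duplicates removed), and let $m_L(v)$ be the multiplicity of $v$ in $L$. The structural claim I will establish is that \emph{$L$ admits an HCO matrix with column multiset $L$ if and only if (a) $L'$ admits an HCO multiordering and (b) $m_L(v) \ge m'(v)$ for every $v \in L'$}, where $m'(v)$ denotes the minimal number of times $v$ must appear in any HCO matrix with underlying set $L'$.

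First I would invoke Theorem~\ref{thm:1d reconstruction CO-harmonious} on $L'$. This either certifies that no HCO matrix with underlying set $L'$ exists (in which case no HCO matrix with multiset $L$ can exist, since its column set is exactly $L'$, and we output ``no''), or it produces an explicit HCO multiordering $A$ of $L'$ in $O(n+k)$ time. After greedily deleting every removable duplicate---each such deletion preserves CO by Lemma~\ref{lem:multiset1}(1) and preserves HCO precisely because a removable duplicate is, by definition, not needed to separate an inharmonious pair---I may assume $A$ is \emph{reduced}, meaning that removing any instance of any repeated vector introduces an inharmonious pair.

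Next I would read off the minimal multiplicities. For each $v$, Lemma~\ref{lem:multiset2} applies to its occurrences in the reduced matrix $A$ and shows that any HCO matrix with underlying set $L'$ contains $v$ at least as often as $A$ does, while $A$ itself realizes that number; hence I set $m'(v)$ to be the multiplicity of $v$ in $A$. This value is intrinsic to $L'$ (independent of the reduced ordering chosen) because the proof of Lemma~\ref{lem:multiset2} identifies it with the number of $R_{v = 0}$-connected components, an invariant of the underlying set. I then compare multiplicities: if $m_L(v) < m'(v)$ for some $v$, the necessity half forbids any HCO realization of $L$ and I output ``no''; otherwise every target multiplicity meets or exceeds the minimum.

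In the feasible case I finish by padding. Since $m_L(v) \ge m'(v)$ for all $v$ and $\sum_v m_L(v) = n$, the reduced matrix $A$ has at most $n$ columns, and I can insert $m_L(v) - m'(v)$ extra copies of each $v$ immediately adjacent to an existing copy; by Lemma~\ref{lem:multiset1}(3) each such insertion preserves HCO. The output is an HCO matrix with exactly $n$ columns whose column multiset is exactly $L$. Forming $L'$, running the reconstruction of Theorem~\ref{thm:1d reconstruction CO-harmonious}, reducing, computing and comparing multiplicities, and padding are all linear, giving the claimed $O(n+k)$ bound. The main obstacle I anticipate is the bookkeeping in the middle step: I must be certain that the minimal multiplicities of all vectors can be achieved \emph{simultaneously} within a single matrix, not merely one vector at a time, and that the reduced output of Theorem~\ref{thm:1d reconstruction CO-harmonious} genuinely attains them. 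This is exactly what the connected-component characterization inside the proof of Lemma~\ref{lem:multiset2} supplies, so the argument hinges on invoking that lemma at full strength rather than on any fresh computation.
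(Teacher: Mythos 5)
Your proposal is correct and follows essentially the same route as the paper: run the set-reconstruction algorithm of Theorem~\ref{thm:1d reconstruction CO-harmonious} on the underlying set, use Lemma~\ref{lem:multiset2} to certify that the multiplicities obtained after deleting all removable duplicates are the simultaneous minima, reject if the target multiset falls below them, and otherwise pad adjacent copies via Lemma~\ref{lem:multiset1}. The only cosmetic difference is that you reduce fully to the minimal multiplicities before padding, whereas the paper adjusts each vector's count up or down toward the target directly; the two are equivalent.
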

\begin{proof}As before, we construct an HCO matrix on the underlying set
of $L$ using the algorithm of Theorem 
\ref{thm:1d reconstruction CO-harmonious}.
Suppose the resulting multiset of columns is $L'$, which is not in
bijection with $L$. Then $L'$ has, for some vector $v$,
either more copies or fewer copies than $L$. We claim that for each
such $v$ we can change $M$ to retain an HCO matrix such that the number
of appearances of $v$ in $M$ (and thus in $L'$) matches that of $L$.

If $L'$ has fewer copies of $v$ than $L$, Lemma \ref{lem:multiset1}
allows us to add in duplicates next to other instances of $v$.
If $L'$ has more copies of $v$ than $L$, we remove any copy that
does not leave an innharmonious pair. By Lemma~\ref{lem:multiset2},
this number will be the minimum possible achieved by an HCO matrix. 
So, if this number is still greater than desired, we can decide
the multiset as being unrealizable.
\end{proof}

As before, the conjunction of the CCO property and harmoniousness
seems mysterious, and we do not have a solution to the multiset 
reconstruction problem for the circular case of the sensor-dense regime.

\section{The Enumeration Problem}

Suppose we have $k$ sets and $n$ sensors in one of our regimes. We can count several related objects, such as possible codes or possible sensor matrices. For us, counting the following seems to be the most natural: call a set of $k$ \textbf{distinct} length-$n$  vectors a \emph{discrete interval set} if it is the set of rows for for some $k \times n$ sensor matrix in the corresponding regime. Any sensor matrix can be constructed by taking such a set, freely duplicating rows, and permuting the rows in some order. For each regime, let the number of discrete interval sets of $k$ vectors in $\FF_2^n$ be $c_{n,k}$. Our problem then becomes finding $c_{n,k}$ in some nice form, be it closed form, generating functions, etc.  

\subsection{Linear Enumeration}
\label{sec:linear-enumeration}

The sensor-sparse regime is again very simple:

\begin{prop}
\label{prop:soft enumeration}
For the linear case of the sensor-sparse regime, we can enumerate the discrete interval sets via $c_{n,k} = {{n + 1 \choose 2} \choose k}$. For the circular case, $c_{n,k} = {n^2-n+1 \choose k}$ for $n \geq 2$.
\end{prop}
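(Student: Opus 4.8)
The plan is to reduce both formulas to a single counting question: how many distinct (linear, resp.\ circular) discrete intervals live in $\FF_2^n$? The reduction rests on a stacking bijection. Given any $k$-element set $T$ of discrete intervals (in the linear case) or circular discrete intervals (in the circular case), stack its elements as the rows of a $k \times n$ matrix; each row is by construction a discrete interval, so the matrix is CO (resp.\ CCO), and its row set is exactly $T$. Hence $T$ is a discrete interval set. Conversely, the $k$ distinct rows of any such sensor matrix form precisely such a set. Since no constraint is imposed on the columns in the sensor-sparse regime, discrete interval sets of size $k$ are in bijection with $k$-subsets of the set of all discrete intervals, so $c_{n,k} = \binom{D}{k}$, where $D$ is the number of distinct discrete intervals in the relevant geometry. (In the circular case one must also check that ``every row is an arc'' is invariant under the column-rotation equivalence defining a circular sensor matrix, so that $D$ is well defined.)

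For the linear case I would count $D$ directly. A discrete interval is a nonempty contiguous block of $1$'s, hence determined by its left and right endpoints $1 \le i \le j \le n$; equivalently, the $n$ coordinates create $n+1$ gaps, and a discrete interval corresponds to an unordered choice of two distinct gaps bounding it. Either description gives $D = \binom{n+1}{2}$, so $c_{n,k} = \binom{\binom{n+1}{2}}{k}$. The one point to flag is the convention that a discrete interval carries at least one $1$, so the all-$0$'s vector is excluded; this is exactly what makes $D = \binom{n+1}{2}$ rather than $\binom{n+1}{2}+1$.

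For the circular case the same reduction applies, so it remains to count arcs of $1$'s on the $n$-cycle. I would split by arc length $\ell$: for each $\ell$ with $1 \le \ell \le n-1$ there are exactly $n$ arcs, one per starting position, and these $n(n-1)$ proper arcs are pairwise distinct precisely because $\ell < n$. Adding the single all-$1$'s arc (length $n$, to which all starting positions collapse) gives $D = n(n-1) + 1 = n^2 - n + 1$, whence $c_{n,k} = \binom{n^2-n+1}{k}$. The hypothesis $n \ge 2$ enters here: it guarantees that the all-$1$'s vector is genuinely distinct from every proper arc and is counted exactly once; the degenerate case $n = 1$ should be checked by hand.

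The routine steps are the two endpoint/arc enumerations. The part demanding the most care is the bookkeeping of the degenerate vectors---excluding all-$0$'s and including all-$1$'s exactly once---together with verifying that the stacking construction interacts correctly with the rotation equivalence in the circular case. Once those conventions are pinned down, both formulas fall out as the binomial coefficients $\binom{D}{k}$.
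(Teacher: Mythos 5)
Your proposal is correct and follows essentially the same route as the paper: both arguments observe that in the sensor-sparse regime rows are unconstrained relative to one another, so $c_{n,k}=\binom{D}{k}$ where $D$ is the number of admissible rows, and both compute $D=\binom{n+1}{2}$ via the gap bijection in the linear case and $D=n(n-1)+1$ via starting-position-and-length plus the all-$1$'s row in the circular case. Your explicit bookkeeping of the excluded all-$0$'s vector and the once-counted all-$1$'s arc matches the paper's conventions.
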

\begin{proof}
For the linear case, we just need each of the $k$ rows 
to be a nontrivial CO row vector, as having a row in 
the discrete interval set is independent of having any other row. 
Consider the $(n+1)$ ``gaps'' between the $n$ positions 
(counting the two ``gaps'' on the boundary). 
There is a bijection between nontrivial CO rows and 
the choice of two distinct such gaps -- put $1$'s 
in all the positions between the two gaps and $0$'s 
elsewhere. This gives ${n + 1 \choose 2}$ possible 
row vectors, of which we can arbitrarily select $k$. 

For the circular case, we can describe a consecutive 
arc of $1$'s that does not cover the whole row by 
picking any of the $n$ elements as the first $1$, 
then having $1$'s to its right (wrapping around if necessary), 
stopping after we have a number of $1$'s between $1$ and $(n-1)$. 
This gives $n(n-1)$ such choices, for which we need to add 
$1$ (corresponding to having the entire row be $1$, 
which we forbade earlier to avoid overcounting). 
This gives a total of $n^2-n+1$ row vectors. 
\end{proof}

We now go to the sensor-dense regime. 

\begin{thm}
\label{thm:harmonious-gf}
For the linear case of the sensor-dense regime, we can enumerate 
discrete interval sets by computing the following generating 
function for the numbers $c_{n,k}$:
\[
f(x, y) = \sum_n \sum_k c_{n,k} x^ny^k = \sum_{m=0}^\infty \frac{x^m}{(1-a_1x)(1-a_2x)\cdots(1-a_{m+1}x)},
\]
where $a_i = (1+y)^i - 1$.
\end{thm}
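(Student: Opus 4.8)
The plan is to reformulate a discrete interval set as a family of intervals subject to a clean endpoint condition, and then to count such families by scanning the gaps between sensor positions, which produces the generating function essentially by summing geometric series.

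First I would fix the objects. A discrete interval set in the linear sensor-dense regime is a set of $k$ distinct rows of some $k \times n$ HCO matrix. Duplicating a row repeats a coordinate in every column, and reordering rows permutes those coordinates simultaneously; since comparability of two $0$-$1$ vectors is unaffected by repeating or permuting coordinates, and each row remains a single block of ones, neither operation changes the consecutive-ones or harmonious property. Hence such a set is determined by its underlying set of intervals, and it is a discrete interval set precisely when the $k$ distinct nonempty discrete intervals, taken as rows in any order, form an HCO matrix. Identifying an interval with its support $[\ell,u] \subseteq \{1,\dots,n\}$ and recording its endpoints among the $n+1$ gaps $0,1,\dots,n$ as the pair $(\ell-1,u)$, the next step is to translate the harmonious condition. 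Two adjacent columns $i,i+1$ are inharmonious exactly when some interval ends at position $i$ while some interval begins at position $i+1$; thus the matrix is HCO if and only if no interval begins immediately after another ends, equivalently the set $R=\{u_r\}$ of right-endpoint gaps and the set $L=\{\ell_r-1\}$ of left-endpoint gaps are disjoint subsets of $\{0,\dots,n\}$.

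Next I would encode each such family by labelling the $n+1$ gaps. Call a gap a \emph{closing gap} if it lies in $R$ and an \emph{open gap} otherwise, and for each closing gap $b$ record the nonempty set of strictly earlier open gaps $a<b$ for which $(a,b)$ is an interval of the family. Since $L \cap R = \emptyset$, every left endpoint is an open gap, so this data recovers the family, and conversely any choice of closing gaps together with, for each, a nonempty set of earlier open gaps produces a valid family; this is a bijection under which $y$ marks the total number $k$ of chosen pairs. Gap $0$ is always open, since no interval ends at position $0$. To extract the generating function I would scan the gaps left to right tracking the \emph{level} $j$, the number of open gaps seen so far: an open gap raises the level by one with weight $1$, while a closing gap at level $i$ selects a nonempty subset of the $i$ available earlier open gaps and so contributes $\sum_{\emptyset \ne T \subseteq [i]} y^{|T|} = (1+y)^i - 1 = a_i$. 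Writing $N \ge 1$ for the number of open gaps and $\beta_i \ge 0$ for the number of closing gaps inserted at level $i$ (for $i = 1,\dots,N$), the number of sensor positions is $n = (\#\text{gaps}) - 1 = N - 1 + \sum_i \beta_i$, so that
\[ f(x,y) = \sum_{N \ge 1} x^{N-1} \prod_{i=1}^{N} \sum_{\beta_i \ge 0} (a_i x)^{\beta_i} = \sum_{N \ge 1} \frac{x^{N-1}}{\prod_{i=1}^{N}(1-a_i x)}, \]
and substituting $m = N-1$ gives the stated formula.

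The main obstacle is the first translation: reducing the harmonious/HCO condition to the disjointness $L \cap R = \emptyset$, and justifying that a discrete interval set depends only on its underlying set of intervals. The other delicate point is the bookkeeping that turns the $n+1$ gaps into the exponent $n$, which is exactly what produces the prefactor $x^{N-1} = x^m$; once the level statistic is identified, verifying that distinct closing gaps contribute independently (so the weights multiply as $a_i^{\beta_i}$) and summing the geometric series is routine.
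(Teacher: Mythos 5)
Your proof is correct and follows essentially the same route as the paper's: both reduce the harmonious condition to disjointness of the set of right endpoints from the set of (shifted) left endpoints, partition the families by the set of right endpoints, and observe that a right endpoint preceded by $i$ available left-endpoint positions contributes a factor $(1+y)^i-1=a_i$. Your gap-scanning assembly of the sum of geometric series is a somewhat more direct way to arrive at the generating function than the paper's coefficient-matching step, but the underlying decomposition is identical.
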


\begin{proof}
Consider a discrete interval set $R$. Note that each row $r \in R$, since it is nonzero, must have a first occurrence and a last occurrence of $1$. Index the $n$ coordinates of $r$ by $[n]$. Define $f(r)$ to be the last index where $r$ has value $1$ and define $g(r)$ to be $1$ less than the first index where $r$ has value $1$ (it may also be useful to think of $r$ as a CCO row and think of $g(r)$ as the last index where $r$ has value $0$, wrapping around circularly if necessary). For example, the row $r = 001110$ has $f(r) = 5$ and $g(r) = 3-1 = 2$. Note that our conditions for $R$ being a discrete interval set (i.e. any, equivalently all, of the matrices with rows coming from $R$ is HCO) is satisfied precisely when there is no pair of rows $r_1$ and $r_2$ in $R$ such that $g(r_1) = f(r_2)$.

Now, for $S \subset [n]$, let $E_S$ be the set of discrete interval sets where the range of $f(r)$ over the rows is exactly $S$. This creates a partition of all possible discrete interval sets. Suppose we have $S = \{i_1, \ldots, i_m\}, i_1 < i_2 < \cdots < i_m \leq n$. We define auxilary variables $i_1' \leq i_2' \leq \cdots \leq i'_m \leq n - m + 1$, where $i'_j = i_j - j + 1$. To count $E_S$, recall that all rows $r$ must have $f(r) \in S$. We claim that the contribution of $E_S$ to the $[x^n]$ coefficient of $f(x,y)$ is exactly
\[
((1+y)^{i_1'} - 1) ((1+y)^{i_2'} - 1) ((1+y)^{i_3'} - 1)\cdots ((1+y)^{i_m'} - 1).
\]
To see this, first consider the rows $r$ with $f(r) = i_1$. This provides a (multiplicative) contribution of $(1+y)^{i_1'} - 1$. This is because any of the smaller coordinates can either exist as $g(r)$ for some row in $R$ (in which case we pick up a power of $y$) or not, giving $(1+y)^{i_1'}$, from which we must subtract $1$ since we need at least one row with $f(r) = i_1$. Now consider the rows $r$ with $f(r) = i_2$; any of the smaller coordinates except $i_1$ can be used as $g(r)$, which gives $(1+y)^{i_2 - 1} - 1 = (1+y)^{i_2'} - 1$ choices by similar logic. Repeating this argument gives our expression above. If we denote $a_i = 2^i - 1$, then we can get the $[x^n]$ coefficient in $f(x,y)$ by summing over all terms $a_{i_1'}a_{i_2'}\cdots a_{i_m'}$, $i_1' \leq \cdots \leq i_m'$ for all $n >= i_m' + m - 1$. Hence, we obtain
\[
f(x, y) = \sum_m \sum_{i_1' \leq i_2' \leq \cdots \leq i_m'} a_{i_1'}a_{i_2'}\cdots a_{i_m'} \frac{x^{i_m' + m - 1}}{1-x}. 
\]

\begin{figure}
\begin{minipage}{.5\textwidth}
\flushright $\begin{matrix}
& 1 & 2 & 3 & 4 & 5 & 6 & 7 & 8 \\ \hline
& 1 & 0 & 0 & 0 & 0 & 0 & 0 & 0 \\
\bigcirc & & \bigcirc    & \bigcirc  & 1 & 0 & 0 & 0 & 0 \\
\bigcirc & & \bigcirc    & \bigcirc  & & \bigcirc  & \bigcirc & \bigcirc & 1 \\
%& \bigcirc    & \bigcirc  & & \bigcirc  & \bigcirc & \bigcirc & 1 \\ 
\hline 
\end{matrix}$
\end{minipage} \hspace{5mm}
\begin{minipage}{.35\textwidth}
$S = \{1,4,8\}$ \\
$\bigcirc \longrightarrow$ Possible value of $g(r)$. \\[3mm]
Contribution of $E_S$: \\
$y \cdot ((1 + y)^3 - 1) \cdot ((1 + y)^6 - 1)$
\end{minipage}
\caption{Illustrative example for Proof of Theorem~\ref{thm:harmonious-gf}.}
\end{figure}
We now rethink the $[x^n]$ coefficient in $f(x, y)$. 
Our expression tells us that $[x^n]$ should get a 
contribution of $1$ (from the null set), any monomial 
of degree $1$ with index at most $n$ (from sets of size $1$), 
any monomial of degree $2$ with highest index at most $n - 1$, 
$\ldots$, the monomials of degree $n-1$ with highest index at most $2$, and finally $a_1^n$. The function in the statement of the theorem can be rewritten as
\[
\frac{1}{1-a_1x} + \frac{x}{(1-a_1x)(1-a_2x)} + \frac{x^2}{(1-a_1x)(1-a_2x)(1-a_3x)} + \cdots.
\]
Here, the first term picks up $a_1^n$, 
the second term picks up any monomial 
(again of the $a_i$'s) of degree $n-1$ with only $a_1$ 
and $a_2$, $\ldots$, monomials of degree $1$ with only 
$a_1$ through $a_n$, and finally $1$. 
These are exactly the previously-determined contributions 
of the coefficient $[x^n]$ in $f(x, y)$ in reverse order. 
Thus, these generating functions are equal. 
\end{proof}

Now suppose we substitute $y = 1$ (which gives $a_i = 2^i - 1$). 
We now obtain the function $\sum c_n x^n$, where $c_n = \sum_k c_{n,k}$. 
Here, $c_n$ counts the number of discrete interval sets of length-$n$ rows. 
As an example, there are $3$ possible nonzero rows of length $2$: 
$\{a = 01, b = 10, c = 11\}$. Any of the $2^3 = 8$ subsets of these 
rows form a (possibly trivial) harmonious CO matrix, 
except for $\{a,b\}$ and $\{a, b, c\}$ since $a$ and 
$b$ force inharmoniousness, so $c_2 = 6$. The first few elements are 
\[
1, 2, 6, 26, 158, 1330, \ldots
\]
which happens to be in OEIS \cite{oeis} (see \url{https://oeis.org/A135922}); it is the inverse binomial transform of $d_n = \sum_{k=0}^n {n \choose k}_2$, the number of all linear subspaces of $\FF_2^{n}$, but with $n$ shifted by $1$. This curious link to linear subspaces leads to the following idea: for any subspace $S \subset \FF_2^{n}$, define its \emph{support} to be the subset of coordinates in $[n]$ which is nonzero for at least one vector in $S$. A strange corollary of Theorem~\ref{thm:harmonious-gf} follows:
\begin{cor}
\label{cor:strange-bijection}
The number of discrete interval sets of length $n$ is equal to the number of subspaces of $\FF_2^{n+1}$ with support equal to $[n+1]$.
\end{cor}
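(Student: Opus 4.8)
The plan is to compute the ordinary generating function of the full-support subspace counts directly and recognize it as the series $f(x,1)$ already produced in Theorem~\ref{thm:harmonious-gf}. Write $s_m$ for the number of subspaces of $\FF_2^m$ whose support is all of $[m]$, so that the assertion is exactly $c_n = s_{n+1}$. Conceptually the identity is an instance of the binomial transform: partitioning every subspace of $\FF_2^N$ according to its support $T \subseteq [N]$, and observing that a subspace with support $T$ is precisely a full-support subspace of $\FF_2^T$, gives $d_N = \sum_{m=0}^N \binom{N}{m} s_m$, where $d_N$ is the total number of subspaces of $\FF_2^N$. This is exactly the statement, recorded in the remark preceding the corollary, that $c_n$ is the inverse binomial transform of the Galois numbers shifted by one. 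Rather than invert this relation, I would compute $\sum_m s_m x^m$ in closed form and compare directly.

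The key tool is the reduced row echelon form: every subspace of $\FF_2^m$ has a unique RREF basis, determined by a pivot set $P \subseteq [m]$ together with a choice of free entries. For a non-pivot column $c$, the entries free to be $0$ or $1$ are exactly those in rows whose pivot lies to the left of $c$; writing $r_P(c) = \#\{p \in P : p < c\}$, that column admits $2^{r_P(c)}$ choices, so the number of subspaces with pivot set $P$ is $\prod_{c \notin P} 2^{r_P(c)}$. Now I impose full support. A pivot column always lies in the support, whereas a non-pivot column $c$ lies in the support if and only if its free entries are not all zero; this replaces its factor by $2^{r_P(c)} - 1$ and forbids non-pivot columns with $r_P(c) = 0$. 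Since the columns with $r_P(c) = 0$ are precisely those before the first pivot, full support forces $1 \in P$. Recalling that $a_i = (1+y)^i - 1$ specializes to $a_i = 2^i - 1$ at $y=1$ (and $a_0 = 0$, which makes any term with $1 \notin P$ vanish automatically), we obtain, for $m \ge 1$,
\[ s_m = \sum_{\substack{P \subseteq [m] \\ 1 \in P}} \ \prod_{c \in [m] \setminus P} a_{r_P(c)}. \]

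Grouping the non-pivot columns by the gaps between consecutive pivots finishes the computation: if $P$ has $k$ elements and there are $g_i \ge 0$ non-pivot columns lying after the $i$-th pivot (so $k + g_1 + \cdots + g_k = m$), each such column contributes the factor $a_i$, giving $s_m = \sum_{k \ge 1} \sum_{g_1 + \cdots + g_k = m-k} \prod_{i=1}^k a_i^{g_i}$, together with $s_0 = 1$. Summing the geometric gap-series yields $\sum_{m \ge 0} s_m x^m = \sum_{k \ge 0} x^k \prod_{i=1}^k (1 - a_i x)^{-1}$, and dividing by $x$ (equivalently, reading off the coefficient of $x^n$ in $s_{n+1}$) gives $\sum_{n \ge 0} s_{n+1} x^n = \sum_{j \ge 0} x^j \prod_{i=1}^{j+1}(1 - a_i x)^{-1}$, which reproduces exactly $f(x,1) = \sum_n c_n x^n$ from Theorem~\ref{thm:harmonious-gf}. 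Comparing coefficients of $x^n$ yields $c_n = s_{n+1}$.

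The main obstacle is the full-support analysis of the RREF in the second step: one must argue carefully that full support is equivalent to the two conditions ``$1 \in P$'' and ``every non-pivot column has a nonzero free part,'' and in particular verify the boundary behaviour that pins the first pivot to column $1$. Once the product formula for $s_m$ is in hand, the rest is routine geometric-series bookkeeping aligning the two series, where the appearance of $a_i = 2^i - 1$ on both sides (subspace counts over $\FF_2$ versus the $y=1$ specialization of the enumeration) is precisely what makes the match work.
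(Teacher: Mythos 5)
Your proof is correct, and it takes a genuinely different route from the paper's. The paper starts from the identity $c_n = \sum_k (-1)^{n+1-k}\binom{n+1}{k}\,d_k$ (the inverse binomial transform of the Galois numbers, coming from the OEIS identification noted just before the corollary) and then reinterprets that alternating sum by inclusion--exclusion over support sets: summing signed binomial coefficients over supersets of the support of each subspace kills everything except the full-support subspaces. You instead bypass the binomial transform entirely and count full-support subspaces directly via the reduced row echelon form: the pivot set $P$ plays the role that the set $S$ of last-one positions plays in the proof of Theorem~\ref{thm:harmonious-gf}, the factor $a_{r_P(c)} = 2^{r_P(c)}-1$ for each non-pivot column matches the factors $(1+y)^{i_j'}-1$ specialized at $y=1$, and summing the geometric series over gap sizes reproduces $f(x,1)$ after the shift by one. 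Your RREF analysis is sound (the support of a subspace is the union of the supports of its RREF rows, pivot columns are automatically in the support, and $a_0=0$ correctly forces the first pivot into column $1$), and the series bookkeeping checks out. What your approach buys is self-containment --- the paper's argument leans on the unproved-in-text assertion that $c_n$ equals the stated alternating sum, whereas you derive everything from Theorem~\ref{thm:harmonious-gf} --- and it comes much closer to the ``more direct bijection'' the paper asks for, since it exhibits parallel decompositions (last-one positions versus pivot positions) with identical weight contributions. What the paper's approach buys is brevity: granting the binomial-transform identity, the M\"obius-inversion step is three lines.
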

\begin{proof}
We know our number $c_n$ is equal to $\sum_k (-1)^k {n+1 \choose k}\sum_{i = 0}^k {k \choose i}_2$. One can imagine that this sum represents the following formal sum over subspaces of $\FF_2^{n+1}$: we first pick $k$ coordinates for an embedding of $\FF_2^k$ inside $\FF_2^{n+1}$, find a subspace of any dimension $i \leq k$ of that copy of $\FF_2^k$, which induces a subspace of $\FF_2^{n+1}$ (by assigning $0$'s to any coordinates outside of those $k$ used), and assigning the coefficient $(-1)^k$ formally to this subspace. Equivalently, we could have, for any subspace $S$ with support set $K \subset [n+1]$, summed $(-1)^{|K'|}$ for all supersets $K' \supset K$. This formal sum is always $0$ unless $K' = [n+1]$, in which case it contributes $1$. Thus, we obtain exactly the number of subspaces of $\FF_2^{n+1}$ with support equal to $[n+1]$.
\end{proof}

\begin{ex}
Recall that $c_2 = 6$. We may check that there are indeed $6$ subspaces of $F_2^3$ with support equal to $[3]= \{1,2,3\}$. These are realized by generators as $\langle 111 \rangle$, $\langle 100, 011 \rangle$, $\langle 010, 101 \rangle$, $\langle 001, 110 \rangle$, $\langle 100, 010, 001 \rangle$, and $\langle 110, 011 \rangle = \langle 110, 101 \rangle = \langle 011, 101 \rangle$.  This is a curious coincidence! Besides the strange shift by $1$, the two objects seem very different: discrete interval sets are defined by pairwise local restrictions (being harmonious) and linear subspaces are defined by set-wise conditions (linear independence). A more direct bijection may be enlightening here. 
\end{ex}

Enumerating codes directly seems to be harder than 
enumerating discrete interval sets. One strategy may 
be to encode the problem algebraically, such as in 
the direction of work of Curto et al. \cite{curto1,Curtoetal}, 
and then apply 
brute-force computational search. 
It would be helpful to have some bounds, 
e.g. a minimal number of columns necessary to realize all 
codes on $n$ intervals, to use this strategy.

\subsection{Circular Enumeration}
\label{sec:circular-enumeration}

Finally, we enumerate discrete interval sets for the circular case. Interestingly, while the circular case was harder for the set reconstruction problem, it is the easier case for the enumeration problem due to its added symmetry.

\begin{thm}
\label{thm:harmonious-cco-gf}
For the circular case of the sensor-dense regime, we can enumerate discrete interval sets by computing the following generating function for the numbers $c_{n,k}$:
\[
f(x, y) = \sum_n \sum_k c_{n,k} x^ny^k = 1 + \sum_{m=1}^\infty \frac{2x^m}{(1-a_mx)^{m+1}}.
\]
Here, $a_i = (1+y)^i - 1$.
\end{thm}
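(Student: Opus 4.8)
The plan is to mirror the proof of Theorem~\ref{thm:harmonious-gf}, replacing the linear ``first/last $1$'' bookkeeping with its circular analogue and exploiting the extra rotational symmetry. By Proposition~\ref{prop:matrices-realizability}(4), a set $R$ of rows is a discrete interval set exactly when the associated matrix is HCCO, so I first reformulate the HCCO condition combinatorially. Arrange the $n$ cyclic \emph{gaps} (the $n$ pairs of cyclically adjacent columns) as $\ZZ/n$. Each proper row $r$ (a genuine arc, neither all-$0$ nor all-$1$) has a unique entry gap $g(r)$, where its block of $1$'s begins, and exit gap $f(r)$, where it ends, and $r \mapsto (g(r),f(r))$ is a bijection between proper rows and ordered pairs of \emph{distinct} gaps. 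Just as in the linear case, two cyclically adjacent columns are inharmonious iff some row exits and some row enters at the gap between them; hence $R$ is HCCO iff the exit-gap set $F=\{f(r)\}$ and entry-gap set $G=\{g(r)\}$ are disjoint. One structural difference from the line is decisive: there are $n$ gaps rather than $n+1$, and there is no left-of/right-of constraint, so an arc may enter at any gap and exit at any other.

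Next I would enumerate the proper discrete interval sets by partitioning them according to $m := n-|G|$, the number of gaps that are \emph{not} entry gaps. Choosing which $m$ gaps these are contributes $\binom{n}{m}$. The remaining $n-m$ gaps are precisely the entry gaps, and here the circular symmetry pays off: because there is no ordering constraint, each entry gap may be paired with \emph{any} nonempty subset of the $m$ non-entry gaps as its set of exit gaps, independently of the others. Tracking the number of arcs by $y$, each entry gap therefore contributes a factor $\sum_{\varnothing\neq S}y^{|S|}=(1+y)^m-1=a_m$, so the contribution of this block is $\binom{n}{m}a_m^{\,n-m}$. Summing over $n$ and $m$ and using $\sum_{n\ge m}\binom{n}{m}t^{\,n-m}=(1-t)^{-(m+1)}$ with $t=a_mx$ collapses the count of proper discrete interval sets to the clean form $\sum_{m\ge 0}\frac{x^m}{(1-a_mx)^{m+1}}$, whose $m=0$ term is just $1$. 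This is exactly where the circular formula is simpler than the linear one: the varying indices $a_{i_j'}$ of Theorem~\ref{thm:harmonious-gf} are all replaced by a single $a_m$.

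The main obstacle is the remaining factor of $2$ together with the isolated leading $1$. The closed form above already accounts for all proper configurations; the stated generating function is twice this on every term with $m\ge 1$, and I expect to obtain the doubling from the two-fold symmetry intrinsic to the circle, namely the two directions in which one may traverse it, equivalently the freedom to swap the roles of $F$ and $G$, which are interchangeable on $S^1$ but not on $\RR^1$. The delicate points to verify are that this symmetry acts freely for $n\ge 1$, so that each configuration is genuinely recorded twice, while it degenerates at $n=0$, which is precisely why the $m=0$ term survives with coefficient $1$ rather than $2$ and produces the shape ``$1+\sum_{m\ge1}\tfrac{2x^m}{(1-a_mx)^{m+1}}$''. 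Getting this multiplicity bookkeeping exactly right, and checking it against small cases such as $n=2$ where the count is $2+4y$, is the crux of the argument.
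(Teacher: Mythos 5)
Your enumeration of the proper configurations is correct and is essentially the paper's argument in dual form: the paper partitions by the exact set $S$ of exit gaps ($|S|=m$ giving $\binom{n}{m}a_{n-m}^{m}$) and then reindexes, while you partition by the set of entry gaps and land directly on $\binom{n}{m}a_m^{\,n-m}$. The reformulation of HCCO as ``exit-gap set disjoint from entry-gap set'' and the summation $\sum_{n\ge m}\binom{n}{m}(a_mx)^{n-m}=(1-a_mx)^{-(m+1)}$ are both exactly as in the paper, so everything up to $h(x,y)=\sum_{m\ge0}x^m/(1-a_mx)^{m+1}$ is sound.

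The factor of $2$ is where the proposal has a genuine gap: it does not come from orientation reversal. Reversing the circle (swapping the roles of $F$ and $G$) is an involution on the set of proper discrete interval sets, and your sum already counts every such set exactly once --- for $n=2$ the proper sets are $\varnothing,\{10\},\{01\}$, each appearing once in $1+2y$; nothing is ``recorded twice,'' and the involution is not even free ($\varnothing$ is fixed). The missing ingredient is the all-$1$'s row, which you excluded from ``proper'' and never reinstated: it is CCO and harmonious with every other row, so for $n\ge 1$ every proper discrete interval set can be augmented by it, and it is this adjunction --- not a symmetry --- that the paper uses to pass from $h$ to $f=2h-1$ (the $-1$ corrects $n=0$, where there is no all-$1$'s row, which is also why the constant term is $1$ rather than $2$). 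One caveat about your planned sanity check: adjoining the all-$1$'s row adds a row, so the honest bivariate count at $n=2$ is $1+3y+2y^2$ (sets $\varnothing$; $\{10\},\{01\},\{11\}$; $\{10,11\},\{01,11\}$), not the $2+4y$ produced by the stated formula; the two agree only at $y=1$. The paper elides this by declaring the all-$1$'s row to ``contribute a factor of $2$,'' so following its route reproduces the stated generating function, but a literal coefficient-by-coefficient check at $n=2$ will not match.
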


\begin{proof}
For this case, we exclude the consideration of the all $1$'s row since it behaves differently from other rows. This is not a problem as having the all $1$'s row is harmonious with all other rows, so it just contributes a factor of $2$ at the end for $n \geq 1$ (as for $n = 0$ there is no all $1$'s row). Let $h(x,y)$ be the generating function for these restricted discrete interval sets where the all $1$'s row is not allowed. We can then obtain $f(x,y) = 2h(x,y) - 1$ by our argument above.

Consider a discrete interval set $R$. As before, define $f(r)$ to be the last index where $r$ has value $1$ and define $g(r)$ to be the last index where $r$ has value $0$, allowing wraparound. For example, the row $r = 110001$ has $f(r) = 2$ and $g(r) = 5$. Note that $f$ and $g$ are not well-defined when $r$ is the all $1$'s vector, which is why we excluded it. As before, our conditions for $R$ being a discrete interval set is satisfied precisely when there is no pair of rows $r_1$ and $r_2$ in $R$ such that $g(r_1) = f(r_2)$.

Again, for $S \subset [n]$, let $E_S$ be the set of discrete interval sets where the range of $f(r)$ over the rows is exactly $S$. In this case, counting $|E_S|$ is much easier. For each choice of $S = \{i_1, \ldots, i_m\}$, there are $(n-m)$ indices available as the codomain for $g(r)$. The contribution to the $[x^n]$ coefficient of $f'(x,y)$ from $E_S$ is just $((1+y)^{n-m} - 1)^m$, because for any of the $m$ values in the range of $f(r)$, the other $(n-m)$ indices either exist as $g(r)$ for some row or not, and again we subtract $1$ since we need at least one such row.

%\vfill

As before, if we denote $a_i = 2^i - 1$, then we can get the $[x^n]$ coefficient in $h(x,y)$ by summing this contribution for $E_S$ over all $S$. There are exactly ${n \choose m}$ possible sets $E_S$ for every $m$, so we obtain
\[
\begin{array}{cccccccc}
h(x, y) & = & \displaystyle \sum_{n=0} \displaystyle \sum_{m=0} 
{n \choose m} a_{n-m}^m x^n & = & \displaystyle \sum_{n=0} \displaystyle
 \sum_{m=0} {n \choose m} a_m^{n-m} x^n & = & \displaystyle \sum_{k=0} 
\frac{1}{a_m^m} \displaystyle\sum_{n = 0} {n \choose m} a_m^n x^n \\
& = & \displaystyle \sum_{k=0} \frac{1}{a_m^m} \frac{(a_mx)^m}{(1-a_mx)^{m+1}} &
= & \displaystyle \sum_{k=0} \frac{x^m}{(1-a_mx)^{m+1}} \: .
\end{array}
\]

Finally, multiplying by $2$ and subtracting $1$ obtains $f(x,y)$ when we allow the all $1$'s row.
\end{proof}

We can again substitute $y = 1$ to obtain the generating function for $c_n = \sum_k c_{n,k}$, the number of discrete interval sets of length-$n$ rows. 

\begin{ex}
As an example, we compute $c_3$. Since the all $1$'s row just gives a factor of $2$, we can again exclude it. The vectors we can use are $110, 101, 011, 001, 010,$ and $100$. Note that if we use any of the rows with two $1$'s, the other such rows must be excluded. Suppose we have one such vector (w.l.o.g. $110$) then we cannot also have $001$, but we can have at most one of $100$ and $010$, giving $3$ choices. By symmetry, this gives $9$ discrete interval sets with at least one vector with two $1's$. If we have no such vectors in our discrete interval set, then we can have at most one of $100$, $010$, and $001$, for $4$ more discrete interval sets for a total of $13$. Multiplying by $2$ gives $26$. It is very strange that $c_3$ for the linear case is also $26$, but the two sets look nothing alike!
\end{ex}

The first few $c_n$ are 
\[
1, 2, 6, 26, 174, 1684, \ldots
\]
While this sequence is not in the OEIS, the sequence for $n > 1$ with all elements halved is \cite{oeis} (see \url{https://oeis.org/A001831}). This sequence counts labeled graded $(3)$-avoiding posets, or alternatively, $n \times n$ square $0$-$1$ matrices which square to the zero matrix. Again, a direct bijection would be enlightening.

\section{Conclusion}
\label{sec:conclusion}

We addressed the problems of set/multiset reconstruction and enumeration for
$1$-dimensional (convex neural) codes, under different (sensor-sparse or sensor-dense) topological assumptions. The theory of consecutive-ones matrices is very helpful for the former problem and the latter problem can be attacked with generating functions. The added requirement of harmoniousness makes the circular case more difficult than the linear case. Along the way, we have provided an algorithm that 
determines whether a code is realizable in dimension $1$ and one that provides a certificate for rejection.

\vspace{5mm}

\centerline{
\begin{tabular}{c|cccc}
	& Linear Sparse 	& Circular Sparse 	& Linear Dense 	& Circular Dense 	\\ \hline
Reconstruction	& $\surd$	& $\surd$	& $\surd$	& 		\\
Enumeration*	& $\surd$	& $\surd$	& $\surd$	&$\surd$	\\
\end{tabular}
}

\vspace{1mm}

\subsection{Higher Dimensions}

There are obvious generalizations of the ideas in
this paper to codes of higher dimension, especially considering that higher-dimensional convex sets project to convex sets in lower dimension. However, it becomes trickier to consider different topological regimes. For example, the formulation of the reconstruction problem in \cite{Curtoetal} assumes the \textbf{open} convex sets, but we may also want to consider closed convex sets, arbitrary convex sets, etc. We start such a discussion with Appendix~\ref{sec:topology}, but there is clearly much more to be done. 

% \item The sensor conditions require work to translate into higher dimensions. Having dense sensors can be fairly naturally generalized as requiring all nonempty intersections to appear in the list of codewords. Having sparse sensors seems more ambiguous. One possibility would be to allow any codewords from sets of measure zero (in the standard measure) to be dropped. One may draw distinctions between measure-zero sets of differing dimension. Additionally, some sets may be forced to appear in any realization, but with arbitrarily small measure; may  these codewords also be discarded? These complications make the sensor-sparse condition harder to realize in higher dimensions. % Hmm I don't agree with this

Another nontrivial aspect of this project in higher dimensions is the correct idea of convexity. In dimension $1$, the idea of a set of codewords translating into a discrete union of convex intervals has a straightforward interpretation. However, in higher dimensions, there are various definitions of discrete (or digital) convexity. Webster \cite{webster} believed that the
right setting for digital convexity is the {\em abstract cell complex} (ACC),
first introduced by Kovalevsky \cite{Kovalevsky}. The ACC
includes cubes of dimension $n$ and then all lower-dimensional
faces of those cubes. In this setup, unlike other proposed
schemes for digital convexity, straightforward analogues of 
the classical results Radon's Theorem, Helly's Theorem,
and Caratheodory's Theorem all hold.
Unfortunately, this definition relies on the use of closed hyperplanes.
The analog of an open set would have any face automatically
include all neighboring higher-dimensional faces, which would make all these theorems fail. Indeed, the only open convex sets under this definition whose rotations are a Helly family is the family of rectangles. Some other versions of convexity are summarized in \cite{eckhardt}:
\begin{enumerate} \setlength{\itemsep}{-2mm}
\item {\em MP-convexity}: Two lattice points $x,y$ in a convex set $S$
means that any lattice point on their line segment is also in $S$.
\item {\em H-convexity}: A convex set $S \subset \ZZ^d$ must contain all lattice points
in its convex hull as a subset of $\RR^d$.
\item {\em D-convexity} and {\em DH-convexity}: Different versions
of digital lines are used to modify MP-convexity.
\end{enumerate}

For our purposes, coming from a sampled convex set in
a real Euclidean space, H-convexity is the most attractive.
However, due to the failure of Radon, Caratheodory and Helly,
proofs may be difficult for H-convexity.

\subsection{Open Problems}

Some natural combinatorial questions remain:
\begin{itemize} \setlength{\itemsep}{-2mm}
\item Does there exist a linear-time algorithm for determining
whether a code is harmonious circular consecutive-ones?
 Is there a rejection criterion?
\item How to enumerate (generating function?) codes as opposed to the discrete interval sets?
\item How many intervals are necessary to realize every code on $n$ neurons with $k$ codewords?
\item Are there good explanations / bijections to the other combinatorial objects found on the OEIS in Sections~\ref{sec:linear-enumeration} and \ref{sec:circular-enumeration}?
\item Can we bound the size of the matrix necessary to realize all codes on $n$ neurons with $k$ codewords? This seems especially important for applications going into higher dimensions. For example, we may then be able to do sampling (or even search) on the space of codes.
\end{itemize}

\vspace{-1mm}

\subsection*{Acknowledgements}
This project was initiated at a 2014 AMS Mathematics Research Community, 
``Algebraic and Geometric Methods in Applied Discrete Mathematics,''
supported by NSF DMS-1321794. Early stages of the research were
enriched by insights from Carina Curto, Elizabeth Gross, Jack Jeffries, Katie Morrison, Mohamed Omar, Anne Shiu, and Nora Youngs. We thank Carina Curto for reviewing early drafts of the work.
ZR was supported in part by a Simons Foundation Math+X research grant for the
late stages of this research.

\vspace{-1mm}

\bibliographystyle{alpha}
{\small
\bibliography{RZsources}}

\appendix

\section{Topological Considerations}
\label{sec:topology}

In this paper, we worked with \textbf{open} convex intervals. Two other natural choices are:
\begin{itemize}
\item arrangements of convex closed intervals;
\item arrangements of convex arbitrary intervals (open, closed, or half-open).
\end{itemize}
In this Appendix, we discuss the relationship between these questions.

\begin{prop}
\label{prop:allsame}
The set of codes realizable by any of the three types of constraints (open convex, closed convex, arbitrary convex) in the sensor-sparse regime are identical.
\end{prop}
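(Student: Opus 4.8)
The plan is to reduce everything to the combinatorial characterization of Proposition~\ref{prop:matrices-realizability}, exploiting the fact that sensor-sparse realizability is governed by the (circular) consecutive-ones property, which is completely insensitive to the boundary type of the intervals. Write $\mathcal{O}$, $\mathcal{K}$, $\mathcal{A}$ for the classes of codes that are sensor-sparse realizable by open, closed, and arbitrary convex intervals respectively. Since open and closed intervals are special cases of arbitrary convex intervals, we immediately get $\mathcal{O} \subseteq \mathcal{A}$ and $\mathcal{K} \subseteq \mathcal{A}$, so it suffices to prove $\mathcal{A} \subseteq \mathcal{O}$ and $\mathcal{A} \subseteq \mathcal{K}$.

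For the first inclusion I would revisit the ``only if'' direction of Proposition~\ref{prop:matrices-realizability} and verify that it never used openness. Given any arrangement of arbitrary convex intervals together with a finite sensor set realizing a code $\mathcal{C}$, reading the codewords from left to right produces a matrix whose column set is $\mathcal{C}$; each row is a discrete interval because a convex subset of $\RR$ of any boundary type meets the sorted sensors in a single contiguous block. Hence every code in $\mathcal{A}$ is the column set of a CO matrix (and of a CCO matrix in the circular case).

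For the reverse inclusions I would reuse the explicit construction from the ``if'' direction of Proposition~\ref{prop:matrices-realizability}: place sensors at the integer points $1,\ldots,n$ and, for a row with block $i,\ldots,j$, build the interval with endpoints $i-\epsilon$ and $j+\epsilon$ for a fixed $0 < \epsilon < \tfrac12$. The crucial point is that no sensor ever lies on an endpoint, so replacing each open interval $(i-\epsilon, j+\epsilon)$ by the closed interval $[i-\epsilon, j+\epsilon]$ (or by any half-open variant) leaves unchanged which sensors it contains; moreover the degenerate all-$0$ and all-$1$ rows are realized by $\emptyset$ and by the whole space, each of which is simultaneously open and closed. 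Thus the same consecutive-ones matrix is realized by an all-open arrangement and by an all-closed arrangement, giving $\mathcal{A} \subseteq \mathcal{O}$ and $\mathcal{A} \subseteq \mathcal{K}$; combined with the trivial inclusions this yields $\mathcal{O} = \mathcal{K} = \mathcal{A}$. The circular case is identical, with CCO matrices in place of CO matrices and sensors placed at $n$ points around $S^1$.

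I do not expect a serious obstacle: the whole statement rests on the fact that in the sensor-sparse regime we are free to choose both the intervals and finitely many sensors, so we can always nudge the sensors into the interiors of the regions and avoid boundary ambiguities. The only points requiring care are confirming that the convexity (discrete-interval) argument is genuinely boundary-agnostic and handling the two degenerate rows, both of which are routine.
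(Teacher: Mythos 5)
Your proof is correct. The core insight is the same one the paper uses --- in the sensor-sparse regime you can always arrange for no sensor to lie on an interval endpoint, after which the boundary type of each interval is invisible to the code --- but you reach it by a slightly different route. The paper perturbs the \emph{given} arrangement in place: it slightly shortens any interval with an open endpoint at a sensor and slightly enlarges any interval with a closed endpoint at a sensor, so that the same sensor set $S$ now misses all endpoints, and then flips boundary types freely. You instead discard the geometry, factor through the combinatorial characterization of Proposition~\ref{prop:matrices-realizability} (checking that the ``only if'' direction is boundary-agnostic, which it is, since only convexity is used to get a discrete interval per row), and rebuild via the canonical construction with sensors at integers and endpoints at $i-\epsilon$, $j+\epsilon$. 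Your version has the small advantage of making explicit that all three classes coincide with the class of column sets of CO (resp.\ CCO) matrices, tying the appendix back to the body of the paper; the paper's version is more self-contained and avoids re-examining the hypotheses of Proposition~\ref{prop:matrices-realizability}. Your handling of the degenerate all-$0$ and all-$1$ rows (realized by $\emptyset$ and the whole space, both clopen) is the right thing to check and matches the paper's construction.
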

\begin{proof}
  Given an arrangement under any of the three types of constraints above, consider the discrete set of sensors $S$. For an interval with an open endpoint in $S$, we may slightly shorten the interval such that the endpoint is no longer in $S$; the code is invariant as $S$ did not detect the interval to begin with. Similarly, for an interval with a closed endpoint in $S$, we may slightly enlarge the interval so that the endpoint is no longer in $S$. Therefore, for each code we may assume our arrangement has no endpoints in $S$. As changing the open/closedness of points in $S$ does not affect our code, we can then realize our code as an arrangement in any of the $3$ types of constraints above. This shows all the constraints are identical.
\end{proof}
 
\begin{prop}
\label{prop:open=closed}
The set of codes realizable by open intervals is identical to the set of codes realizable by closed intervals in the sensor-dense regime.
\end{prop}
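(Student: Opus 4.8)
The plan is to prove the equality by showing that the codes realizable by closed intervals in the sensor-dense regime are exactly the column sets of HCO matrices --- the very characterization that Proposition~\ref{prop:matrices-realizability} gives for open intervals. Since open sensor-dense realizability is equivalent to being an HCO column set, establishing the analogous equivalence for closed intervals immediately forces the two families of codes to coincide. So the whole argument reduces to rereading the proof of Proposition~\ref{prop:matrices-realizability}(2) and checking that each half survives the switch from open to closed intervals.

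For the ``realization $\Rightarrow$ HCO'' direction, I would read a closed realization from left to right as before. Convexity of each closed interval still forces each row to be a single contiguous block of $1$'s, so the matrix is CO. For harmoniousness, the only geometric input used in Proposition~\ref{prop:matrices-realizability} is the dichotomy that two intervals either meet or are separated by a point strictly between them, and this holds verbatim for closed intervals --- the sole change being that two closed intervals sharing exactly one endpoint now \emph{meet} rather than leaving a gap. Concretely, if two spatially adjacent distinct codewords $x,y$ were inharmonious on rows $i,j$, then the closed intervals $i$ and $j$ either share a point (producing a codeword that is $1$ on both $i$ and $j$) or are separated by a gap (producing a codeword that is $0$ on both); in the sensor-dense regime this codeword is detected and lies strictly between $x$ and $y$, contradicting adjacency. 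Hence the ordering is HCO.

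For the converse, ``HCO $\Rightarrow$ closed realization,'' I would mirror the reverse construction of Proposition~\ref{prop:matrices-realizability} using closed sets. Place sensors at $1,\dots,N$ and, for a row whose block of $1$'s occupies columns $i,\dots,j$, assign the closed interval $[i,j]$; assign $\RR$ to an all-$1$'s row and $\emptyset$ to an all-$0$'s row. At each sensor $t$ the codeword is exactly the column $c_t$, and on each open gap $(t,t+1)$ the codeword is the coordinate-wise product $c_t * c_{t+1}$, which equals $\min(c_t,c_{t+1})$ because adjacent columns of an HCO matrix are comparable; thus every detected codeword lies in $\mathcal{C}$. To stop the far-left and far-right regions from contributing a spurious word, I would extend every interval active at the first column to a closed ray $(-\infty,j]$ and every interval active at the last column to $[i,\infty)$; this makes the far-left and far-right codewords equal to $c_1$ and $c_N$, both in $\mathcal{C}$, while leaving all sensor and gap codewords unchanged.

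The conceptual subtlety --- and the step I expect to require the most care --- is precisely this boundary bookkeeping, together with the warning that one cannot simply take closures of a given open realization. Taking closures can create genuinely new codewords: two open intervals that merely touch, say $(0,1)$ and $(1,2)$, realize the code $\{00,10,01\}$, but their closures $[0,1]$ and $[1,2]$ meet at a point and detect the extra word $11$. Thus the closed realization must be built afresh from the HCO matrix rather than transported from the open one, and the unbounded (ray and whole-line) cases are exactly what guarantee the constructed code equals $\mathcal{C}$ on the nose rather than a proper superset. Everything else is the routine verification that the $[i,j]$ assignment reproduces each column and each interpolating product.
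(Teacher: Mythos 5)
Your argument is correct, but it takes a genuinely different route from the paper. The paper never touches the HCO characterization here: it transports one realization directly into the other by a perturbation argument --- shrink each open interval $(a,b)$ to $(a+\epsilon,b-\epsilon)$ so that no right endpoint coincides with any left endpoint, then take closures, observing that codewords can only change at endpoints and the separation condition prevents any gain or loss (and symmetrically, enlarge-then-take-interiors for the other inclusion). You instead re-prove Proposition~\ref{prop:matrices-realizability}(2) with closed intervals in place of open ones and conclude that both families of codes equal the HCO column sets. Both proofs are sound: your forward direction correctly notes that the only geometric input (two intervals either meet or are separated by a point strictly between them, yielding a codeword distinct from both neighbors) survives the switch to closed sets, and your reverse construction with $[i,j]$, rays at the boundary, and the observation that each gap $(t,t+1)$ reads the product $c_t * c_{t+1} = \min(c_t,c_{t+1})$ of comparable adjacent columns is a valid closed analogue of the paper's $(i-\epsilon,j+\epsilon)$ construction. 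What the paper's approach buys is brevity and a direct geometric correspondence between realizations; what yours buys is a structural statement of independent interest (the HCO characterization is insensitive to open versus closed intervals, so the reconstruction machinery of Sections~\ref{sec:sparse-reconstruction}--\ref{sec:dense-reconstruction} applies verbatim to closed realizations), plus an explicit identification of the pitfall --- your $(0,1),[0,1],(1,2),[1,2]$ example showing that naive closure creates the spurious codeword $11$ --- which is precisely the failure mode the paper's $\epsilon$-shrinking is designed to avoid.
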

\begin{proof}
Suppose we have an arrangement of open intervals $A$ with a corresponding code $\mc{C}$. It is possible to pick some $\epsilon > 0$ small enough such that replacing all intervals $(a,b)$ by $(a+\epsilon, b - \epsilon)$ keeps the code intact while ensuring that no right endpoint of any interval equals the left endpoint of any interval. Replacing every intervals $(a,b)$ by its closure $[a,b]$ now creates an arrangement of closed convex intervals where the only possible changes to the codewords occur at endpoints of intervals. As no right endpoint of any interval equals the left endpoint of any interval, we do not gain or lose any codewords by replacing each interval with its closure. To get from an arrangement of closed intervals to one of open intervals, we can reverse this process; first we enlarge all the closed intervals, then replace them by open intervals. The logic works similarly.
\end{proof}

The above proposition is subtle; one might conjecture 
that the same would hold in higher dimension, but a 
counterexample was identified by \cite{shiu}; the code is 
realizable by closed sets in dimension $2$, but is not 
realizable in \textbf{any} dimension for open sets. 
A similar example was identified in \cite{itskov2016}: 
the code is realizable for open sets 
in dimension $2$, but not for closed convex sets in any dimension.
Thus, this Proposition is clearly confined to dimension $1$.

\begin{prop}
The set of codes realizable by arbitrary intervals in the sensor-dense regime is identical to the set of codes realizable by open (equiv. closed or arbitrary) intervals in the \textbf{sensor-sparse} regime.
\label{prop:arbitrary}
\end{prop}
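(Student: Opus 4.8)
The plan is to prove two inclusions. Recall that by Proposition~\ref{prop:allsame} the three sensor-sparse notions (open, closed, arbitrary) coincide, and that by part (1) of Proposition~\ref{prop:matrices-realizability} a code is sensor-sparse realizable in $\RR^1$ exactly when it is the column set of a $CO$ matrix. So it suffices to show that a code $\mc{C}$ is sensor-dense realizable by arbitrary intervals if and only if it is the column set of a $CO$ matrix.

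For the forward inclusion (arbitrary dense $\Rightarrow$ sparse), I would start from an arrangement $\mc{U}$ of arbitrary intervals whose set of all codewords is $\mc{C}$. Since $k$ intervals have at most $2k$ endpoints, the codeword map $c\colon \RR \to \FF_2^k$ is constant on each of the finitely many regions cut out by these endpoints (the open cells and the isolated boundary points). Placing one sensor in each such region yields a finite sensor set $S$ with $\mc{C}(\mc{U},S) = \mc{C}(\mc{U},\RR) = \mc{C}$, so $\mc{C}$ is sensor-sparse realizable by arbitrary intervals; Proposition~\ref{prop:allsame} then upgrades this to a realization by open intervals. This direction is essentially immediate.

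The substantive direction is the reverse: given a $CO$ matrix whose distinct columns $c_1,\dots,c_n$ are listed in consecutive-ones order, I want an arrangement of arbitrary intervals whose full codeword set is exactly $\{c_1,\dots,c_n\}$, with no extra codewords anywhere on $\RR$. The construction assigns to each neuron $r$, whose $1$-columns form the discrete interval $\{a_r,\dots,b_r\}$, the half-open real interval $[a_r,\, b_r+1)$; I would replace the left endpoint by $-\infty$ when $a_r = 1$ and the right endpoint by $+\infty$ when $b_r = n$ (and use $\emptyset$ or $\RR$ for the all-$0$'s and all-$1$'s rows). A direct check then shows the codeword map is constant and equal to $c_i$ on $[i,i+1)$ for $1<i<n$, equal to $c_1$ on $(-\infty,2)$, and equal to $c_n$ on $[n,\infty)$; in particular its image is precisely $\{c_1,\dots,c_n\}$.

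The main obstacle—and the reason open intervals fail in the dense regime—is controlling the transition points and the two unbounded ends, where the sensor-dense regime forces us to account for every point of $\RR$. The half-open choice $[a_r,\, b_r+1)$ makes each transition sharp: at the integer $i+1$ the neurons switching off (open right endpoints) are already $0$ while the neurons switching on (closed left endpoints) are already $1$, so the codeword jumps directly from $c_i$ to $c_{i+1}$ even when these columns are inharmonious, introducing no intermediate codeword. Extending the end neurons to $\pm\infty$ prevents the all-$0$'s codeword from appearing outside a bounded window; this is harmless when the all-$0$'s codeword lies in $\mc{C}$, and when it does not it is always possible, because then $c_1$ and $c_n$ are nonzero, so neurons with $a_r=1$ and with $b_r=n$ exist. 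Verifying that these local choices produce no unintended codeword, at the boundary points or at infinity, is the only step requiring real care.
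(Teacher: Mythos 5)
Your proof is correct for the linear case and rests on the same key idea as the paper's: realize the code by half-open intervals $[a,b)$ anchored at the sensor positions, so that every point of the line sees exactly the codeword of the nearest sensor at or to its left and the transitions are sharp even across inharmonious columns. The paper gets there by rounding the endpoints of a given open-interval arrangement up to the nearest sensor, while you build the arrangement directly from a consecutive-ones ordering of the columns at integer positions; these are the same construction in different clothing, and your forward inclusion matches the paper's. One point in your favor: you explicitly extend the extremal neurons to $\pm\infty$ to prevent a spurious all-$0$'s codeword beyond the outermost sensors, and your justification (if $0\notin\mc{C}$ then $c_1,c_n\neq 0$, so such neurons exist and the points outside $[1,n]$ read exactly $c_1$ or $c_n$) is sound; the paper's proof only discusses points \emph{between} adjacent sensors and is silent on this. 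One gap relative to the statement's intended scope: the proposition also covers arcs on $S^1$ (the paper's proof explicitly treats a circular boundary case), whereas your argument is pinned to $\RR^1$ via part (1) of Proposition~\ref{prop:matrices-realizability} and the use of $\pm\infty$. You would need to add the circular analogue -- start from a CCO ordering and use half-open arcs $[a_r,\,b_r+1)$ with wraparound, where the $\pm\infty$ issue disappears because there are no unbounded ends.
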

\begin{proof}
Suppose we have an arrangement of arbitrary intervals in the sensor-dense regime giving some code $\mc{C}$. Restricting to any finite set of representative sensors that detect $\mc{C}$ gives the same  arrangement with the same code $\mc{C}$ in the sensor-sparse regime. 

Now suppose we have an arrangement of open intervals with a 
finite set of sensors $S$ detecting some code $\mc{C}$. 
We can replace it by an arrangement of intervals of form 
$[a,b)$ with both $a, b \in S$ by rounding up both endpoints 
of each interval to the nearest sensor in $S$ and then making 
the left endpoint closed and the right endpoint open. This 
does not affect which intervals each sensor detects, 
so the code is preserved. The boundary cases here are: \begin{enumerate}
\item Both endpoints of an interval are between 
two adjacent points in $S$. In this case, just discard the interval.
\item In the circular case where both endpoints are 
between two adjacent points in $S$ in opposite order, 
in which case we replace the interval with the entire ambient space ($\RR$ or $S^1$). 
\end{enumerate}
In both cases the remainder of the proof goes through. An arrangement of this form has the property that any point between two adjacent sensors $S_1$ and $S_2$ sees exactly what $S_1$ sees, which means the code of this new arrangement in the sensor-dense regime does not have any new codewords, and must in fact be equal to $\mc{C}$.

\end{proof}
\begin{figure}[h!]
\includegraphics{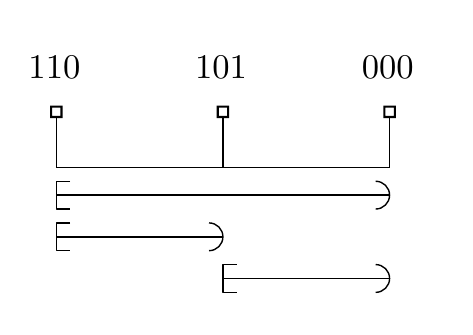}
\caption{Illustration of the Proof of Proposition~\ref{prop:arbitrary}}
\end{figure}
Among the six possibilities of sensor density and interval topology, only two distinct cases emerge: a sensor-sparse case that ``does not see'' topology (and happens to be equivalent to sensor-dense for arbitrary intervals) and a sensor-dense case where the intervals are either all open or all closed. Thus, we reduce to two cases by fixing our intervals to be open convex and just considering sensor density.

\end{document}